\documentclass[article]{elsarticle}
\usepackage{lineno,hyperref}
\modulolinenumbers[5]
\bibliographystyle{elsarticle-num}
\usepackage{amsmath,amssymb,amsthm}
\usepackage{color}
\usepackage{stmaryrd}
\usepackage{graphicx}
\usepackage{lineno,hyperref}
\usepackage{subcaption}
\modulolinenumbers[5]
\usepackage{mathtools}
\usepackage{ifthen}
\usepackage{enumitem}
\usepackage{tikz}
\usepackage{graphicx,tcolorbox}
\usepackage{rotating}
\usepackage{color}
\usepackage{xcolor}
\newtheorem{remark}{Remark}

\newtheorem{thm}{Theorem}

\newtheorem{lem}{Lemma}

\renewcommand{\div}{{\textrm{div }}}
\newcommand{\bdev}{{\bf \text{dev }}}

\newcommand{\bzero}{{\bf 0}}

\newcommand{\norm}[1]{\lVert #1\rVert}
\newcommand{\bI}{{\bf I}}

\newcommand{\R}{\mathbb R}

\newcommand{\bJ}{\mathbf J}

\newcommand{\cS}{{\mathcal S}}
\newcommand{\bP}{{\mathbf P}}
\newcommand{\bQ}{{\mathbf Q}}
\newcommand{\bZ}{{\mathbf Z}}

\newcommand{\bn}{{\bf n}}
\newcommand{\bW}{{\bf W}}

\newcommand{\cT}{\mathcal T}

\newcommand{\bepsilon}{\mbox{\boldmath$\varepsilon$}}

\newcommand{\cV}{\mathcal V}

\newcommand{\Disp}{{\boldsymbol{u}}}
\newcommand{\bzeta}{{\boldsymbol{\zeta}}}
\newcommand{\bu}{{\boldsymbol{u}}}
\newcommand{\bR}{{\boldsymbol{R}}}
\newcommand{\bM}{{\boldsymbol{M}}}

\newcommand{\Stress}{{\boldsymbol{\sigma}}}

\newcommand{\TotalStressdef}[2]{
		\ifthenelse{\equal{#1}{}}
		{\Stress(\Disp) - \alpha \phi  \bI}
		{\Stress(#1) - \alpha #2  \bI}
}
\newcommand{\bTheta}{{\boldsymbol{\Theta}}}
\newcommand{\bgamma}{{\boldsymbol{\gamma}}}
\newcommand{\bz}{{\boldsymbol{z}}}
\newcommand{\bw}{{\boldsymbol{w}}}
\newcommand{\brho}{{\boldsymbol{\rho}}}

\newcommand{\mynote}[1]{}
\newcommand{\bff}{\boldsymbol{f}}

\newcommand{\symgrad}[2]{
\ifthenelse{\equal{#2}{}}
	{
		\ifthenelse{\equal{#1}{}}
		{\boldsymbol\varepsilon}
		{\boldsymbol\varepsilon(#1)}
	}
	{(\boldsymbol\varepsilon(#1),\boldsymbol\varepsilon(#2))}
}
\newcommand{\energy}[1]{
{{\left\vert\kern-0.25ex\left\vert\kern-0.25ex\left\vert #1 
    \right\vert\kern-0.25ex\right\vert\kern-0.25ex\right\vert}}
}

\newcommand{\atwo}[4]{
\ifthenelse{\equal{#2}{}}
	{
 (s_0 #3,#4)+(\alpha \div #1 , #4)+(\kappa \nabla #3,\nabla #4)
	}
	{
\ifthenelse{\equal{#4}{}}
	{
(\symgrad{#1}{#2})+(\lambda(\div #1) \bI
- \alpha #3  \bI,\nabla #2)
	}
	{
(\symgrad{#1}{#2})+(\lambda(\div #1) \bI
- \alpha #3  \bI,\nabla #2)
 +
 (s_0 #3,#4)+(\alpha \div #1 , #4)+(\kappa \nabla #3,\nabla #4)
 }
 }
}

\newcommand{\bV}{{\bf V}}
\newcommand{\bv}{{\bf v}}

\newcommand{\btheta}{\mbox{\boldmath$\theta$}}

\newcommand{\energyds}[3]{
\ifthenelse{\equal{#1}{2}}
	{
\norm{\symgrad{#2}{}}^2+\kappa\norm{\nabla #3}^2
+s_0\norm{ #3}^2
	}
	{
	
	}
}

\newcommand{\tr}{{\rm tr}}

\newcommand{\cA}{{\cal A}}
\newcommand{\bxi}{\mbox{\boldmath$\xi$}}
\newcommand{\bas}{{\bf as}}

\newcommand{\bnabla}{\mbox{\boldmath$\nabla$}}
\begin{document}

\title{A posteriori error estimates by weakly symmetric stress reconstruction for the Biot problem}

\author[1]{Fleurianne Bertrand}
\author[2]{Gerhard Starke}

\address[1]{{%
Institut f\"ur Mathematik}, {Humboldt-Universit\"at zu Berlin}, {Unter den Linden 6, 10099 Berlin, {Germany}.\\
{fb@math.hu-berlin.de}}}
        
\address[2]{{%
Fakult\"at f\"ur Mathematik}, {Universit\"at Duisburg-Essen}, {Thea-Leymann-Str. 9, 45127 Essen, {Germany}.\\
{gerhard.starke@uni-due.de}}}

\begin{abstract}
A posteriori error estimates are constructed for the three-field variational formulation of the Biot problem involving the displacements, the total pressure and
the fluid pressure. The discretization under focus is the $H^1(\Omega)$-conforming Taylor-Hood finite element combination, consisting of polynomial
degrees $k + 1$ for the displacements and the fluid pressure and $k$ for the total pressure. An a posteriori error estimator is derived on the basis of
$H (\textrm{div})$-conforming reconstructions of the stress and flux approximations. The symmetry of the reconstructed stress is allowed to be satisfied only weakly.
The reconstructions can be performed locally
on a set of vertex patches and lead to a guaranteed upper bound for the error with a constant that depends only on local constants associated with the
patches and thus on the shape regularity of the triangulation. Particular emphasis is given to nearly incompressible materials and the error estimates hold
uniformly in the incompressible limit.
Numerical results on the L-shaped domain confirm the theory and the suitable use of the error estimator in adaptive strategies.
\end{abstract} 

\maketitle

\section*{Acknowledgement}
The authors gratefully acknowledge support by the Deutsche Forschungsgemeinschaft in the Priority Program SPP 1748 ‘Reliable simulation techniques in solid mechanics. Development of non standard discretization methods, mechanical and mathematical analysis’ under the project numbers BE 6511/1-1 and STA 402/12-2

\section{Introduction}

The classical Biot model of consolidation for poroelastic media presented in \cite{Bio:41} describes a linearly elastic and porous structure, which is saturated
by a slightly compressible viscous fluid. This system of partial differential equations has a wide range of applications, in particular in geo- and biomechanics.
Their numerical treatment is therefore of crucial importance. As a coupled problem, possibly involving different scales, the derivation of reliable and
efficient error estimates is a challenging task.

In the classical two-fields formulation investigated in \cite{MurThoLou:96}, the system is given by in the conservation of mechanical momentum and the
conservation of the fluid mass, expressed with the displacement field $\Disp$  and the fluid pressure $\varphi$. In particular, it was shown that the
$H^1$-conforming Taylor–Hood finite element combination leads to a optimal a priori estimates,
see also the references in \cite{MurThoLou:96} about the origin of this approach to the Biot model.
A major drawback of this two-fields formulation is the lack
of robustness with respect to the incompressibility parameter. Therefore, several three-fields formulations with various discretisations were proposed, e.g. in
\cite{PhiWhe:08, OyaRui:16, LeeMarWin:17} with particular emphasis given to the robustness with respect to model parameters of the Biot’s model.
Our focus is on the solution of the elliptic system arising from one time-step of an implicit Euler discretization in time. The a posteriori
treatment of space and time error is carried out in \cite{ErnMeu:09}, for example.

Our a posteriori error estimator will be based on the reconstruction of flux and stress in suitable finite element spaces. There is obviously
a strong connection to variational formulations of the Biot problem where flux and stress are already included as independent variables (cf.
\cite{KorSta:05}, \cite{Lee:16}).
A posteriori estimates for the error associated with the two-field formulation by flux and stress reconstruction suitable for an adaptive strategy were
developed in \cite{RieDipErn:17}.
The idea can be traced back to \cite{PraSyn:47} and has led to a posteriori error bounds already in pioneer work \cite{LadLeg:83}. A practical algorithm
based on the idea of equilibration in broken Raviart-Thomas spaces was
presented in \cite{BraSch:08,BraPilSch:09}. A unified framework for a posteriori bounds by flux reconstructions for the finite element approximation of elliptic
problem restricted to bilinearforms involving the full gradient can be find in \cite{ErnVoh:15}. The extension to the two-field formulation of the Biot problem
involves the reconstruction of the flux of the fluid pressure using the above references as well as the reconstruction of total stress tensor. In this tensor
reconstruction, the anti-symmetric part has to be controlled for the use in an associated a posteriori error estimator. Surely, the reconstruction can be
performed in a symmetric space like the Arnold-Winther space from \cite{ArnWin:02} but  a weakly symmetric reconstruction in the Raviart-Thomas finite
element space as in \cite{BerKobMolSta:19} is preferable due to its less complicated structure. The main result of our paper is the a posteriori
error bound obtained by a weakly symmetric reconstruction of the total stress tensor combined with a reconstruction of the Darcy velocity.

The paper is organized as follows. The next section reviews the three-field formulation for the Biot problem and introduce a suitable discretization. In section 3,  the a posteriori error estimator is derived together with the conditions for a weakly symmetric stress equilibration. The equilibration procedure to satisfy these conditions is presented in section 4. Finally, section 5 shows numerical results both for a regular and a singular problem. 

\section{A three-field formulation of the Biot system}

We consider the following three-field formulation that was introduced in \cite[Sect. 3.2]{LeeMarWin:17}: Find $\bu \in H_0^1 (\Omega)^d$,
$p \in L^2 (\Omega)$ and $\varphi \in H_0^1 (\Omega)$ such that
\begin{equation}
  \begin{split}
    2 \mu ( \bepsilon (\bu) , \bepsilon (\bv) ) - ( p , \div \: \bv ) & = ( \bff , \bv ) \hspace{1cm} \forall \bv \in H_0^1 (\Omega)^d \: , \\
    ( \div \: \bu , q ) + \frac{1}{\lambda} ( p - \varphi , q ) & = 0 \hspace{1.5cm} \forall q \in L^2 (\Omega) \: , \\
    \frac{1}{\lambda} ( \varphi - p , \psi ) + \tau ( \nabla \varphi , \nabla \psi ) & = ( g , \psi ) \hspace{1cm} \forall \psi \in H_0^1 (\Omega) \: .
  \end{split}
  \label{eq:Biot_system}
\end{equation}
Here and throughout our paper, $( \: \cdot \: , \: \cdot \: )$ denotes the $L^2 (\Omega)$ inner product.
This coincides with the system \cite[(3.13)]{LeeMarWin:17} if we set $\alpha = 1$, $\kappa = \tau$ and ignore the storage coefficient
$s_0 \sim 1/\lambda$ there (and set $\mu = 1/2$ in (\ref{eq:Biot_system})).
It arises from an implicit Euler time discretization of the parabolic problem associated with the Biot model.
Concerning the robustness of the formulation
with respect to the parameters $\alpha$ and $s_0$, the discussion in \cite[Sect. 3]{LeeMarWin:17} applies.
Note that (\ref{eq:Biot_system}) decouples into an
incompressible elasticity system and a Poisson problem as $\lambda \rightarrow \infty$.
The discretized system consists in finding $\bu_h \in \bV_h$, $p_h \in Q_h$ and $\varphi_h \in S_h$ such that
\begin{equation}
  \begin{split}
    2 \mu ( \bepsilon (\bu_h) , \bepsilon (\bv_h) ) - ( p_h , \div \: \bv_h ) & = ( \bff , \bv_h ) \hspace{1cm} \forall \bv \in \bV_h \: , \\
    ( \div \: \bu_h , q_h ) + \frac{1}{\lambda} ( p_h - \varphi_h , q_h ) & = 0 \hspace{1.5cm} \forall q_h \in Q_h \: , \\
    \frac{1}{\lambda} ( \varphi_h - p_h , \psi_h ) + \tau ( \nabla \varphi_h , \nabla \psi_h ) & = ( g , \psi_h ) \hspace{1cm} \forall \psi_h \in S_h \: ,
  \end{split}
  \label{eq:Biot_system_discrete}
\end{equation}
where $\bV_h \subset H_0^1 (\Omega)^d$, $Q_h \subseteq L^2 (\Omega)$ and $S_h \subset H_0^1 (\Omega)$ are suitable subspaces.
In particular, $V_h$ and $Q_h$ need to satisfy the inf-sup condition with respect to the divergence constraint. Moreover, it is desirable that
all the terms in the energy norm
\begin{equation}
  \begin{split}
    ||| & ( \bu - \bu_h , p - p_h , \varphi - \varphi_h ) ||| \\
    & = \left( 2 \mu \| \bepsilon (\bu - \bu_h) \|^2 + \frac{1}{\lambda} \| p - p_h - (\varphi - \varphi_h) \|^2
    + \tau \| \nabla (\varphi - \varphi_h) \|^2 \right)^{1/2}
  \end{split}
  \label{eq:energy_norm}
\end{equation}
are convergent of the same order. (Note that this is a norm on $H_0^1 (\Omega)^d \times L^2 (\Omega) \times H_0^1 (\Omega)$
since $\nabla (\varphi - \varphi_h) = 0$ implies $\varphi - \varphi_h = 0$ and therefore also $p - p_h = 0$.)
To this end, we combine Taylor-Hood elements (conforming piecewise quadratic for $\bV_h$ with
conforming piecewise linear for $Q_h$ in the lowest-order case, see \cite[Sect. 8.8]{BofBreFor:13}) with conforming piecewise quadratic
elements for $S_h$.

The stress and flux, associated with (\ref{eq:Biot_system}) and (\ref{eq:Biot_system_discrete}), are given by
\begin{equation}
  \begin{split}
    \btheta (\bu,p,\varphi) = 2 \mu \bepsilon (\bu) - ( p - \varphi ) \bI \: , \: \hspace{0.2cm} &
    \btheta (\bu_h,p_h,\varphi_h) = 2 \mu \bepsilon (\bu_h) - ( p_h - \varphi_h ) \bI \: , \: \\
    \bw (\varphi) = - \nabla \varphi \: , \: \hspace{2cm} & \;\;\;\; \bw (\varphi_h) = - \nabla \varphi_h \: ,
  \end{split}
  \label{eq:stress_flux}
\end{equation}
respectively. Note that the definition of the stress in (\ref{eq:stress_flux}) implies
\begin{equation}
  \tr \: \btheta (\bu,p,\varphi) = 2 \mu \: \div \: \bu - d \: ( p - \varphi ) \: .
  \label{eq:trace_stress}
\end{equation}
Our error estimator will be based on reconstructions of the stress and flux which can be computed as a correction to
$\btheta (\bu_h,p_h,\varphi_h)$ and $\bw (\varphi_h)$, respectively. This will be explained in detail in section \ref{sec-reconstruction}.
Motivated by the fact that
$\div \: \btheta (\bu,p,\varphi) = - \bff + \nabla \varphi$ holds, the stress reconstruction $\btheta_h^R$ is required to satisfy
\begin{equation}
  \div \: \btheta_h^R = - \bff + \nabla \varphi_h \mbox{ in } \Omega
  \label{eq:momentum_balance}
\end{equation}
(assuming that $\bff$ is contained in the corresponding space of piecewise polynomials, note that $\nabla S_h \subset \bTheta_h^R$
holds for the Raviart-Thomas elements used for reconstruction in section \ref{sec-reconstruction}).

Similarly, the identity
$\tau \: \div \: \bw (\varphi) = g + (p - \varphi)/\lambda$ of the exact flux motivates the condition
\begin{equation}
  \tau \: \div \: \bw_h^R = g + \frac{p_h - P_h^1 \varphi_h}{\lambda} \mbox{ in } \Omega
  \label{eq:mass_balance}
\end{equation}
for the flux reconstruction. Here, $P_h^1$ denotes the $L^2$-orthogonal projection onto the space of piecewise linear (possibly discontinuous)
functions which reflects the fact that $w_h^R$ (as well as $\btheta_h^R$) will be constructed in the Raviart-Thomas space (of next-to-lowest order).
For more general right-hand sides $\widetilde{\bff}$ and $\widetilde{g}$, the system (\ref{eq:Biot_system}) is considered with
$\bff = \bP_h^1 \widetilde{\bff}$ (the vector version of the $L^2$-orthogonal projection) and $g = P_h^1 \widetilde{g}$. The associated error
$( \widetilde{\bu} - \bu , \widetilde{p} - p , \widetilde{\varphi} - \varphi )$ can then be bounded by the approximation errors
$\widetilde{\bff} - \bP_h^1 \widetilde{\bff}$ and $\widetilde{g} - P_h^1 \widetilde{g}$ due to the stability proved in \cite{LeeMarWin:17}.

\section{A posteriori error estimation by stress and flux reconstruction}

Our error estimator, based on the stress and flux reconstructions described in the previous section, will be based on the quantities
\begin{equation}
  \begin{split}
    \eta_S & = \| \btheta_h^R - \btheta (\bu_h,p_h,\varphi_h) \|_\cA \\
    & = ( \btheta_h^R - \btheta (\bu_h,p_h,\varphi_h) , \cA (\btheta_h^R - \btheta (\bu_h,p_h,\varphi_h)) )^{1/2} \: ,
  \end{split}
  \label{eq:estimator_term_stress}
\end{equation}
where $\cA : \R^{d \times d} \rightarrow \R^{d \times d}$ is defined by
\begin{equation}
  \cA \bxi = \frac{1}{2 \mu} \left( \bxi - \frac{\lambda}{2 \mu + d \lambda} (\tr \: \bxi) \bI \right) \: ,
  \label{eq:strain_stress}
\end{equation}
and
\begin{equation}
  \eta_F = \tau^{1/2} \| \bw_h^R - \bw (\varphi_h) \| \;\; , \;\; \eta_P = \frac{1}{{\color{black} \lambda} \tau^{1/2}} \| \varphi_h - P_h^1 \varphi_h \| \: .
  \label{eq:estimator_term_flux}
\end{equation}
The idea for using the additional term $\eta_P$ for the consideration of the lower order term of the third equation in (\ref{eq:Biot_system}) is taken
from \cite{AinVej:19}. Moreover, the additional terms
\begin{equation}
    \eta_A = \| \bas \: \btheta_h^R \| \: , \:
    \eta_C = \| \frac{p_h - \varphi_h}{\lambda} + \div \: \bu_h \|
  \label{eq:estimator_term_deformation}
\end{equation}
will be needed for the error estimation of the deformation problem.

We first consider the stress estimator contribution $\eta_S$ in order to derive an upper bound for the error associated with the deformation problem.

\begin{lem}
  For the contribution of the stress reconstruction to the error estimator, we have
  \begin{equation}
    \begin{split}
      \mu \| \bepsilon (\bu - \bu_h) \|^2 + & \frac{1}{\lambda} \| p - p_h - (\varphi - \varphi_h) \|^2 
      - 2 ( \varphi - \varphi_h , {\rm div} (\bu - \bu_h) ) \\
      & \leq \eta_S^2 + \frac{C_K^2}{4 \mu} \eta_A^2 + {\color{black} \mu} \left( \frac{\lambda C_D}{2 \mu + d \lambda} + \frac{1}{C_D} \right)^2 \eta_C^2 \: .
    \end{split}
    \label{eq:stress_bound}
  \end{equation}
  \label{lem-stress_bound}
\end{lem}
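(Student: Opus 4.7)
The plan builds on three ingredients. First, the equilibrium condition $\div \btheta_h^R = -\bff + \nabla \varphi_h$ combined with the exact divergence identity $\div \btheta(\bu,p,\varphi) = -\bff + \nabla \varphi$ yields $\div(\btheta(\bu,p,\varphi) - \btheta_h^R) = \nabla(\varphi - \varphi_h)$. Testing against $\bu - \bu_h \in H_0^1(\Omega)^d$ and integrating by parts row-wise, then splitting $\btheta_h^R$ into symmetric and antisymmetric parts (using that $\btheta(\bu,p,\varphi)$ is symmetric and that $\bepsilon(\bu - \bu_h)$ annihilates antisymmetric tensors), I obtain the core identity
\begin{equation*}
  (\btheta(\bu,p,\varphi) - \btheta_h^R,\,\bepsilon(\bu - \bu_h)) = (\varphi - \varphi_h,\,\div(\bu - \bu_h)) + (\bas\,\btheta_h^R,\,\bas\,\nabla(\bu - \bu_h)) \; ,
\end{equation*}
which exposes the cross-term $-2(\varphi - \varphi_h,\,\div(\bu - \bu_h))$ appearing in (\ref{eq:stress_bound}) in a form accessible through $\eta_S$ via Cauchy--Schwarz and through $\eta_A$ via the antisymmetric part of $\btheta_h^R$.

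Second, the exact constraint $p - \varphi + \lambda \div \bu = 0$ from the second equation of (\ref{eq:Biot_system}) makes the compliance identity $\cA \btheta(\bu,p,\varphi) = \bepsilon(\bu)$ hold exactly for $\cA$ as in (\ref{eq:strain_stress}); a direct computation then yields
\begin{equation*}
  \cA \btheta(\bu_h,p_h,\varphi_h) - \bepsilon(\bu_h) = -\frac{\lambda}{2\mu + d\lambda}\left( \frac{p_h - \varphi_h}{\lambda} + \div \bu_h \right) \bI \; ,
\end{equation*}
so that the consistency defect of the discrete compliance is controlled precisely by $\eta_C$. Inserting this into the expansion of the mixed $\cA$-inner product $(\cA(\btheta_h^R - \btheta(\bu_h,p_h,\varphi_h)),\,\btheta(\bu,p,\varphi) - \btheta(\bu_h,p_h,\varphi_h))$, together with the Galerkin orthogonality $(\div(\bu - \bu_h) + \lambda^{-1}((p-p_h)-(\varphi-\varphi_h)),\,q_h) = 0$ for $q_h \in Q_h$, allows the target left-hand side to be re-expressed in terms of $\eta_S$ (by Cauchy--Schwarz in the $\cA$-norm), the antisymmetric pairing from the first step, and the $\eta_C$-residual.

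The proof is finalized by Cauchy--Schwarz and Young's inequality. For the weak-symmetry pairing, the Korn-type inequality $\|\bas\,\nabla(\bu - \bu_h)\| \leq C_K \|\bepsilon(\bu - \bu_h)\|$ valid for $\bu - \bu_h \in H_0^1(\Omega)^d$ combined with a weighted Young's inequality with weight $\mu$ produces precisely $C_K^2/(4\mu)$ in front of $\eta_A^2$. The $\eta_C$-contribution enters in two distinct places --- once scaled by $\lambda/(2\mu + d\lambda)$ from the compliance correction, once through a direct $L^2$-pairing with $\div(\bu - \bu_h)$ --- and a Young's inequality with a free parameter $C_D > 0$ balances the two subcontributions, giving the coefficient $\mu(\lambda C_D/(2\mu + d\lambda) + 1/C_D)^2$ in front of $\eta_C^2$. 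Absorbing the resulting $\mu\|\bepsilon(\bu - \bu_h)\|^2$-terms on the right back into the left-hand side then yields (\ref{eq:stress_bound}).

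The main obstacle I anticipate is the algebraic bookkeeping in the middle step: the combination of the equilibration identity, the compliance defect, and the constraint orthogonality must be arranged so that exactly $\mu\|\bepsilon(\bu - \bu_h)\|^2$ (rather than $2\mu\|\bepsilon(\bu - \bu_h)\|^2$) and exactly $-2(\varphi - \varphi_h,\,\div(\bu - \bu_h))$ remain on the left of (\ref{eq:stress_bound}), and so that --- uniformly in $\lambda$, and in particular as $\lambda \to \infty$ --- the discrete constraint residual enters only through $\eta_C$ rather than through a $\lambda^{1/2}\|\div(\bu - \bu_h)\|$-type quantity, which would be uncontrollable in the target norm.
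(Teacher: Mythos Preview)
Your overall architecture is right --- expand $\eta_S^2$ around the exact stress, use the equilibration identity and integration by parts to extract the cross-term $-2(\varphi-\varphi_h,\div(\bu-\bu_h))$, and identify the compliance defect $\cA\btheta(\bu_h,p_h,\varphi_h)-\bepsilon(\bu_h)=-\frac{\lambda}{2\mu+d\lambda}\,s\,\bI$ with $s=(p_h-\varphi_h)/\lambda+\div\bu_h$. For the antisymmetric term your global Korn argument $\|\bas\nabla(\bu-\bu_h)\|\le C_K\|\bepsilon(\bu-\bu_h)\|$ works in the present $H_0^1$ setting (indeed with $C_K=1$); note, however, that the paper bounds $(\bas\btheta_h^R,\bnabla(\bu-\bu_h))$ differently, by exploiting the weak symmetry $(\btheta_h^R,\bJ^d(\bgamma_h))=0$ together with a \emph{local} Korn inequality on vertex patches (Theorem~\ref{thm-local_bounds}). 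That is what makes $C_K$ depend only on the shape regularity of $\cT_h$ and not on $\Omega$, which matters once the boundary conditions are relaxed.

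The genuine gap is in your treatment of $\eta_C$. You describe $C_D$ as a \emph{free} Young parameter balancing two occurrences of $s$, one ``scaled by $\lambda/(2\mu+d\lambda)$'' and one ``paired with $\div(\bu-\bu_h)$''. But in the actual identity the compliance correction produces the term
\[
\frac{2\lambda}{2\mu+d\lambda}\bigl(\tr(\btheta_h^R-\btheta(\bu,p,\varphi)),\,s\bigr),
\]
and a naive Cauchy--Schwarz on this is \emph{not} robust: since $\|\tr\bxi\|\le\sqrt{d(2\mu+d\lambda)}\,\|\bxi\|_\cA$ is sharp, the best you get is a factor $\sim\lambda/\sqrt{2\mu+d\lambda}\sim\sqrt{\lambda}$, and completing the square with $\|\btheta_h^R-\btheta\|_\cA^2$ only yields $-\lambda\,\eta_C^2$ on the right. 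No choice of Young parameter repairs this. The paper's point is that $C_D$ is a \emph{fixed} constant coming from the second inequality in Theorem~\ref{thm-local_bounds}: because $s\perp Q_h$ (the discrete constraint equation, which you do cite), one can subtract local averages and invoke the dev--div lemma on vertex patches to obtain
\[
\bigl|(\tr(\btheta_h^R-\btheta(\bu,p,\varphi)),s)\bigr|\le C_D\,\mu^{1/2}\|\btheta_h^R-\btheta(\bu,p,\varphi)\|_\cA\,\|s\|,
\]
uniformly in $\lambda$. Only after this step does a Young inequality (absorbing $\|\btheta_h^R-\btheta\|_\cA^2$) give the coefficient $\mu\bigl(\frac{\lambda C_D}{2\mu+d\lambda}+\frac{1}{C_D}\bigr)^2$. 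In short: the orthogonality $s\perp Q_h$ must be used through a dev--div argument on the trace term, not merely to ``re-express the left-hand side''; without that, the bound you obtain will not be uniform as $\lambda\to\infty$.
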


\begin{proof}
  Adding and substracting the exact stress $\btheta (\bu,p,\varphi) = 2 \mu \bepsilon (\bu) - (p - \varphi) \: \bI$ leads to
  \begin{equation}
    \begin{split}
      \eta_S^2 & = \| \btheta_h^R - \btheta (\bu,p,\varphi) + \btheta (\bu,p,\varphi) - \btheta (\bu_h,p_h,\varphi_h) \|_\cA^2 \\
      & = \| \btheta_h^R - \btheta (\bu,p,\varphi) \|_\cA^2 \\
      & + ( 2 (\btheta_h^R - \btheta (\bu,p,\varphi)) + \btheta(\bu,p,\varphi) - \btheta(\bu_h,p_h,\varphi_h) , \\
      & \hspace{4cm} \cA (\btheta(\bu,p,\varphi)-\btheta(\bu_h,p_h,\varphi_h)) ) \: .
    \end{split}
    \label{eq:stress_1}
  \end{equation}
  Inserting
  \begin{equation}
    \begin{split}
       \cA ( & \btheta (\bu,p,\varphi) - \btheta(\bu_h,p_h,\varphi_h) ) \\
      & = \cA (2 \mu \bepsilon (\bu - \bu_h) - ((p-p_h) - (\varphi - \varphi_h)) \bI) ) \\
      & = \bepsilon (\bu-\bu_h)
      - \frac{\lambda}{2 \mu + d \lambda} \left( \div (\bu - \bu_h) + \frac{p - p_h - (\varphi - \varphi_h)}{\lambda} \right) \bI \: , \\
      & = \bepsilon (\bu-\bu_h)
      + \frac{\lambda}{2 \mu + d \lambda} \left( \div \bu_h + \frac{p_h - \varphi_h}{\lambda} \right) \bI \: ,
    \end{split}
    \label{eq:stress_2}
  \end{equation}
  which follows using the definition (\ref{eq:strain_stress}) and the second equation in (\ref{eq:Biot_system}), implies
  \begin{equation}
    \begin{split}
      \eta_S^2 = & \| \btheta_h^R - \btheta (\bu,p,\varphi) \|_\cA^2 + 2 (\btheta_h^R - \btheta (\bu,p,\varphi) , \bepsilon (\bu-\bu_h) ) \hspace{2cm} \\
      & + \frac{2 \lambda}{2 \mu + d \lambda} ( \tr \: (\btheta_h^R - \btheta (\bu,p,{\color{black} \varphi})) , \div \: \bu_h + \frac{p_h - \varphi_h}{\lambda} ) \\
      & + 2 \mu \| \bepsilon (\bu - \bu_h) \|^2 + \frac{1}{\lambda} \| p - p_h - (\varphi - \varphi_h) \|^2 \\
      & - \frac{2 \mu \lambda}{2 \mu + d \lambda} \| \div \bu_h + \frac{p_h - \varphi_h}{\lambda} \|^2 \: .
    \end{split}
    \label{eq:stress_3}
  \end{equation}
  Integration by parts leads to
  \begin{equation}
    \begin{split}
      (\btheta_h^R - & \btheta (\bu,p,\varphi) , \bepsilon (\bu-\bu_h) ) \\
      & = (\btheta_h^R - \btheta (\bu,p,\varphi) , \bnabla (\bu-\bu_h) - \bas \: \bnabla (\bu-\bu_h) ) \\
      & = ( \div \btheta (u,p,\varphi) - \div \btheta_h^R , \bu - \bu_h ) - (\bas \: \btheta_h^R , \bas \: \bnabla (\bu-\bu_h) ) \\
      & = ( \nabla (\varphi - \varphi_h) , \bu - \bu_h ) - (\bas \: \btheta_h^R , \bas \: \bnabla (\bu-\bu_h) ) \\
      & = - ( \varphi - \varphi_h , \div (\bu - \bu_h) ) - (\bas \: \btheta_h^R , \bas \: \bnabla (\bu-\bu_h) ) \: ,
    \end{split}
    \label{eq:stress_integration_by_parts}
  \end{equation}
  since, by construction, $\div \: \btheta (\bu,p,\varphi) - \div \: \btheta_h^R = \nabla (\varphi - \varphi_h)$ in $\Omega$ and $\bu_h = \bu = \bzero$ on
  $\partial \Omega$. Therefore, (\ref{eq:stress_3}) can be rewritten as
  \begin{equation}
    \begin{split}
      \eta_S^2 = & \| \btheta_h^R - \btheta (\bu,p,\varphi) \|_\cA^2 \\
      & - 2 ( \varphi - \varphi_h , \div (\bu-\bu_h) ) - ( \bas \: \btheta_h^R , \bas \: \bnabla (\bu-\bu_h) ) \hspace{2cm} \\
      & + \frac{2 \lambda}{2 \mu + d \lambda} ( \tr \: (\btheta_h^R - \btheta (\bu,p,{\color{violet} \varphi}))) , \div \: \bu_h + \frac{p_h - \varphi_h}{\lambda} ) \\
      & + 2 \mu \| \bepsilon (\bu - \bu_h) \|^2 + \frac{1}{\lambda} \| p - p_h - (\varphi - \varphi_h) \|^2 \\
      & - \frac{2 \mu \lambda}{2 \mu + d \lambda} \| \div \bu_h + \frac{p_h - \varphi_h}{\lambda} \|^2 \: .
    \end{split}
    \label{eq:stress_4}
  \end{equation}
  Using the inequalities
  \begin{equation}
    \begin{split}
      | ( \bas \: \btheta_h^R , \bas \: \nabla (\bu - \bu_h) ) | & \leq C_K \| \bas \: \btheta_h^R \| \: \| \bas \: \nabla (\bu - \bu_h) ) \| \: , \\
      | ( \tr (\btheta_h^R - \btheta (\bu,p,{\color{black} \varphi}))) , & \div \bu_h + \frac{p_h - \varphi_h}{\lambda} ) | \\
      \leq C_D & {\color{black} \mu^{1/2}} \| \btheta_h^R - \btheta (\bu,p,{\color{black} \varphi})) \|_\cA
      \| \frac{p_h - \varphi_h}{\lambda} + \div \: \bu_h \|
    \end{split}
    \label{eq:local_bounds_both}
  \end{equation}
  which will be proved in section \ref{sec-reconstruction} as Theorem \ref{thm-local_bounds}, we are led from (\ref{eq:stress_4}) to
  \begin{align*}
      \eta_S^2 & \geq \| \btheta_h^R - \btheta (\bu,p,\varphi) \|_\cA^2
      - 2 ( \varphi - \varphi_h , \div (\bu-\bu_h) ) - C_K \| \bas \: \btheta_h^R \| \: \| \bepsilon (\bu-\bu_h) \| \hspace{2cm} \\
      & - \frac{2 \lambda \mu^{1/2}}{2 \mu + d \lambda} C_D \| \btheta_h^R - \btheta (\bu,p,{\color{black} \varphi}))) \|_\cA \:
      \| \div \: \bu_h + \frac{p_h - \varphi_h}{\lambda} \| \\
      & + 2 \mu \| \bepsilon (\bu - \bu_h) \|^2 + \frac{1}{\lambda} \| p - p_h - (\varphi - \varphi_h) \|^2
      - \frac{2 \mu \lambda}{2 \mu + d \lambda} \| \div \bu_h + \frac{p_h - \varphi_h}{\lambda} \|^2
  \end{align*}
  and from this, using the Young inequality, to
  \begin{equation}
    \begin{split}
      \mu & \| \bepsilon (\bu - \bu_h) \|^2 + \frac{1}{\lambda} \| p - p_h - (\varphi - \varphi_h) \|^2 
      - 2 ( \varphi - \varphi_h , \div (\bu - \bu_h) ) \\
      \leq & \eta_S^2 + \frac{C_K^2}{4 \mu} \| \bas \btheta_h^R \|^2
      + \left( \mu ( \frac{\lambda C_D}{2 \mu + d \lambda} )^2 + \frac{2 \lambda}{2 \mu + d \lambda} \right)
      \| \div \bu_h + \frac{p_h - \varphi_h}{\lambda} \|^2 \: .
    \end{split}
    \label{eq:stress_5}
  \end{equation}
  Finally, this proves our estimate (\ref{eq:stress_bound}).
  \end{proof}
  
\begin{lem}
  For the contribution of the flux reconstruction to the error estimation, we have
  \begin{equation}
    \frac{3}{4} \tau \| \nabla (\varphi - \varphi_h) \|^2 + \frac{2}{\lambda} ( \varphi - \varphi_h - (p - p_h) , \varphi - \varphi_h )
    \leq \eta_F^2 + 4 C_F^2 \eta_P^2 \: .
    \label{eq:flux_bound}
  \end{equation}
  \label{lem-flux_bound}
\end{lem}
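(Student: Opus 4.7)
The plan is to mirror the strategy used for Lemma \ref{lem-stress_bound}, but with the flux variable. I would start from
\[
  \eta_F^2 = \tau \| \bw_h^R - \bw(\varphi_h) \|^2 = \tau \| (\bw_h^R - \bw(\varphi)) + (\bw(\varphi) - \bw(\varphi_h)) \|^2,
\]
expand the square, and note that $\bw(\varphi)-\bw(\varphi_h) = -\nabla(\varphi-\varphi_h)$ so the last square is exactly $\tau \| \nabla(\varphi-\varphi_h) \|^2$. The term $\tau \| \bw_h^R - \bw(\varphi) \|^2 \geq 0$ can simply be dropped at the end.

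The crucial step is the cross term $2 \tau ( \bw_h^R - \bw(\varphi) , \bw(\varphi) - \bw(\varphi_h) )$. I would apply integration by parts, using that $\varphi-\varphi_h \in H_0^1(\Omega)$, to get
\[
  2 \tau ( \bw_h^R - \bw(\varphi) , -\nabla(\varphi-\varphi_h) ) = 2 ( \div (\tau \bw_h^R - \tau \bw(\varphi)) , \varphi - \varphi_h ).
\]
Then I would substitute the divergence conditions \eqref{eq:mass_balance} and $\tau \div \bw(\varphi) = g + (p-\varphi)/\lambda$ (from the third equation of \eqref{eq:Biot_system}), so that $\tau \div (\bw_h^R - \bw(\varphi)) = (p_h - P_h^1 \varphi_h - (p - \varphi))/\lambda$. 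Rearranging this into $-(p-p_h) + (\varphi-\varphi_h) + (\varphi_h - P_h^1\varphi_h)$ splits the cross term into exactly the target lower-order contribution $\tfrac{2}{\lambda}(\varphi-\varphi_h-(p-p_h),\varphi-\varphi_h)$ plus an unwanted piece $\tfrac{2}{\lambda}(\varphi_h - P_h^1 \varphi_h , \varphi - \varphi_h)$.

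It remains to absorb this last piece. Cauchy--Schwarz followed by the Poincar\'e--Friedrichs inequality $\|\varphi-\varphi_h\| \leq C_F \|\nabla(\varphi-\varphi_h)\|$ (valid since $\varphi-\varphi_h \in H_0^1(\Omega)$) yields
\[
  \Big| \tfrac{2}{\lambda}(\varphi_h - P_h^1\varphi_h, \varphi-\varphi_h) \Big| \leq 2 C_F \eta_P \cdot \tau^{1/2} \| \nabla(\varphi - \varphi_h) \|,
\]
after factoring $\tau^{1/2}$ in and out to recognize $\eta_P$. A Young inequality of the form $2ab \leq \tfrac{1}{4}b^2 + 4a^2$ absorbs $\tfrac{\tau}{4}\|\nabla(\varphi-\varphi_h)\|^2$ into the left-hand side, producing the factor $\tfrac{3}{4}$ and the constant $4 C_F^2$ on the $\eta_P^2$ term, which is exactly \eqref{eq:flux_bound}. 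The main subtlety is purely bookkeeping: keeping the three residual contributions to $\varphi_h - P_h^1\varphi_h$, $\varphi-\varphi_h$, and $p-p_h$ straight when rewriting the divergence mismatch, so that precisely the mixed term $(\varphi-\varphi_h-(p-p_h),\varphi-\varphi_h)$ appears with the correct sign to pair with its counterpart in Lemma \ref{lem-stress_bound}.
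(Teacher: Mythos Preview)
Your proposal is correct and follows essentially the same approach as the paper: add and subtract the exact flux, integrate the cross term by parts, insert the divergence identities for $\bw_h^R$ and $\bw(\varphi)$, split off the $\varphi_h - P_h^1\varphi_h$ contribution, and absorb it via Poincar\'e--Friedrichs and Young. The only cosmetic difference is that the paper applies Young first (producing $\tfrac{\tau}{4C_F^2}\|\varphi-\varphi_h\|^2$) and then Poincar\'e--Friedrichs, whereas you apply Cauchy--Schwarz and Poincar\'e--Friedrichs first and then Young; the resulting constants are identical.
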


\begin{proof}
  Adding and substracting the exact flux $\bw (\varphi) = - \nabla \varphi$ leads to
  \begin{equation}
    \begin{split}
      \eta_F^2 & = \tau \| \bw_h^R - \bw (\varphi) + \bw (\varphi) - \bw (\varphi_h) \|^2
      = \tau \| \bw_h^R - \bw (\varphi) - \nabla (\varphi - \varphi_h) \|^2 \\
      & = \tau \| \bw_h^R - \bw (\varphi) \|^2 - 2 \tau (\bw_h^R - \bw (\varphi) , \nabla (\varphi - \varphi_h) ) + \tau \| \nabla (\varphi - \varphi_h) \|^2 \\
      & \geq \frac{2}{\lambda} ( \varphi - P_h^1 \varphi_h - (p - p_h) , \varphi - \varphi_h ) + \tau \| \nabla (\varphi - \varphi_h) \|^2 \: ,
    \end{split}
    \label{eq:flux_1}
  \end{equation}
  where, after integrating by parts, the conservation properties of the reconstructed and of the exact flux,
  \[
    \tau \: \div \: \bw_h^R = g+ \frac{p_h - P_h^1 \varphi_h}{\lambda} \mbox{ and } \tau \: \div \: \bw (\varphi)
    = g + \frac{p - \varphi}{\lambda} \: ,
  \]
  respectively, were used. This can be rewritten as
  \begin{equation}
    \begin{split}
      \eta_F^2 & \geq \frac{2}{\lambda} ( \varphi - \varphi_h - ( p - p_h ) , \varphi - \varphi_h )
      + \frac{2}{\lambda} ( \varphi_h - P_h^1 \varphi_h , \varphi - \varphi_h ) \\
      & \hspace{7cm} + \tau \| \nabla (\varphi - \varphi_h) \|^2 \: .
    \end{split}
    \label{eq:flux_2}
  \end{equation}
  {\color{black} The middle term on the right-hand side in (\ref{eq:flux_2}) may be bounded, using the Young inequality, in the form
  \begin{equation}
    \frac{2}{\lambda} ( \varphi_h - P_h^1 \varphi_h , \varphi - \varphi_h ) \geq 
    - \frac{4 C_F^2}{\tau \lambda^2} \| \varphi_h - P_h^1 \varphi_h \|^2 - \frac{\tau}{4 C_F^2} \| \varphi - \varphi_h \|^2 \: .
    \label{eq:flux_intermediate}
  \end{equation}
  } 
  Using the Poincar\'e-Friedrichs inequality in the form
  \begin{equation}
    \| \varphi - \varphi_h \| \leq C_F \| \nabla (\varphi - \varphi_h) \| \: ,
    \label{eq:PF}
  \end{equation}
  \begin{equation}
    \begin{split}
      \frac{3}{4} \tau \| \nabla (\varphi - \varphi_h) \|^2 + \frac{2}{\lambda} ( \varphi - \varphi_h - (p - p_h) , & \varphi - \varphi_h ) \\
      & \leq \eta_F^2 + 
      \frac{4 C_F}{{\color{black} \lambda^2} \tau} \| \varphi_h - P_h^1 \varphi_h \|^2
    \end{split}
    \label{eq:flux_3}
  \end{equation}
  is obtained which finally proves (\ref{eq:flux_bound}).
\end{proof}

Combining Lemma \ref{lem-stress_bound} and Lemma \ref{lem-flux_bound}, we obtain an upper bound for the total error associated with the
Biot system.

\begin{thm}
  The total error in the energy norm associated with the Biot system satisfies
  \begin{equation}
    \begin{split}
      & 2 \mu \| \bepsilon (\bu - \bu_h) \|^2 
      + \frac{1}{\lambda} \| p - p_h {\color{black} -(\varphi-\varphi_h)} \|^2 + \tau \| \nabla (\varphi - \varphi_h) \|^2 \\
      & \leq 2 \left( \eta_S^2 + \frac{C_K^2}{4 \mu} \eta_A^2
      + \left( {\color{black} \mu} ( \frac{C_D \lambda}{2 \mu + d \lambda} + \frac{1}{C_D} )^2 + \frac{4 C_F^2}{\tau} \right)
      \eta_C^2 + \eta_F^2 + 4 C_F^2 \eta_P^2 \right) .
    \end{split}
    \label{eq:total_bound}
  \end{equation}  
  \label{thm-total_bound}
\end{thm}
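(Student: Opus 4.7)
The plan is to add the estimates of Lemma \ref{lem-stress_bound} and Lemma \ref{lem-flux_bound} and then rewrite the two remaining cross terms in $\varphi-\varphi_h$ so that they nearly cancel, leaving only a residual that can be absorbed via the Poincar\'e--Friedrichs inequality (\ref{eq:PF}).

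The central observation I would exploit is that the second equation in (\ref{eq:Biot_system}) is tested on all of $L^2(\Omega)$, so $\div\,\bu + (p-\varphi)/\lambda = 0$ holds pointwise. Setting $r_h := \div\,\bu_h + (p_h - \varphi_h)/\lambda$ (so $\|r_h\| = \eta_C$), subtraction yields
\[
  \div(\bu-\bu_h) = -\frac{1}{\lambda}\bigl((p-p_h) - (\varphi-\varphi_h)\bigr) - r_h.
\]
Inserting this into the cross term $-2(\varphi-\varphi_h,\div(\bu-\bu_h))$ produced by Lemma \ref{lem-stress_bound} and combining with the cross term $\frac{2}{\lambda}(\varphi-\varphi_h - (p-p_h),\varphi-\varphi_h)$ produced by Lemma \ref{lem-flux_bound}, the two $\frac{2}{\lambda}$-contributions cancel exactly, and only the leftover $2(\varphi-\varphi_h, r_h)$ remains.

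This residual I would then control by Young's inequality with weight $\tau/(4C_F^2)$ followed by (\ref{eq:PF}), obtaining $|2(\varphi-\varphi_h, r_h)| \leq \frac{\tau}{4}\|\nabla(\varphi-\varphi_h)\|^2 + \frac{4C_F^2}{\tau}\eta_C^2$. The gradient piece gets absorbed into the $\frac{3}{4}\tau\|\nabla(\varphi-\varphi_h)\|^2$ contributed by Lemma \ref{lem-flux_bound}, leaving $\frac{\tau}{2}\|\nabla(\varphi-\varphi_h)\|^2$ on the left. Multiplying through by $2$ then produces the stated inequality: the new $\frac{4C_F^2}{\tau}\eta_C^2$ merges with the $\eta_C^2$ coefficient from Lemma \ref{lem-stress_bound}, and writing $\frac{1}{\lambda}$ in place of the derived $\frac{2}{\lambda}$ on $\|p-p_h-(\varphi-\varphi_h)\|^2$ is a harmless weakening.

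The only delicate point is the exact cancellation of the two cross terms, which relies on the continuous constraint being enforced in all of $L^2(\Omega)$ rather than only on $Q_h$; after that, nothing beyond Young's inequality and (\ref{eq:PF}) is required, so no further obstacles are expected.
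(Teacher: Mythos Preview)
Your proposal is correct and coincides with the paper's own proof. The paper adds the two lemma estimates, uses the continuous constraint $\div\,\bu+(p-\varphi)/\lambda=0$ (holding pointwise since tested against all of $L^2(\Omega)$) to convert the combined cross terms into $2(\varphi-\varphi_h,\div\,\bu_h+(p_h-\varphi_h)/\lambda)=2(\varphi-\varphi_h,r_h)$, then applies exactly the Young/Poincar\'e--Friedrichs estimate you describe to absorb $\frac{\tau}{4}\|\nabla(\varphi-\varphi_h)\|^2$ and shift $\frac{4C_F^2}{\tau}\eta_C^2$ to the right, followed by multiplication by $2$; your observation about the harmless weakening $\frac{2}{\lambda}\to\frac{1}{\lambda}$ is likewise what the paper does implicitly.
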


\begin{proof}
  Adding (\ref{eq:stress_bound}) and (\ref{eq:flux_bound}) gives
  \begin{equation}
    \begin{split}
      \mu \| \bepsilon (\bu & - \bu_h) \|^2 
      + \frac{1}{\lambda} 
      \| p - p_h - (\varphi - \varphi_h) \|^2 
      + \frac{3}{4} \tau \| \nabla (\varphi - \varphi_h) \|^2 \\
      & \hspace{2cm} + 2 ( \varphi - \varphi_h , \div \bu_h + \frac{p_h - \varphi_h}{\lambda} ) \\
      & \leq \eta_S^2 + \frac{C_K^2}{4 \mu} \eta_A^2 + {\color{black} \mu} \left( \frac{\lambda C_D}{2 \mu + d \lambda} + \frac{1}{C_D} \right)^2 \eta_C^2
      + \eta_F^2 + 4 C_F^2 \eta_P^2 \: ,
    \end{split}
    \label{eq:total_1}
  \end{equation}
  where the second equation of (\ref{eq:Biot_system}) was used again. With the Poincar\'e-Friedrichs inequality (\ref{eq:PF}) and
  \[
    2 ( \varphi - \varphi_h , \div \bu_h + \frac{p_h - \varphi_h}{\lambda} )
    \geq - \frac{\tau}{4 C_F^2} \| \varphi - \varphi_h \|^2 - \frac{4 C_F^2}{\tau} \| \div \bu_h + \frac{p_h - \varphi_h}{\lambda} \|^2
  \]
  this leads to
  \begin{equation}
    \begin{split}
      \mu & \| \bepsilon (\bu - \bu_h) \|^2 + \frac{1}{\lambda} \| p - p_h - (\varphi - \varphi_h) \|^2 + \frac{1}{2} \tau \| \nabla (\varphi - \varphi_h) \|^2 \\
      & \leq \eta_S^2 + \frac{C_K^2}{4 \mu} \eta_A^2
      + \left( {\color{black} \mu} \left( \frac{\lambda C_D}{2 \mu + d \lambda} + \frac{1}{C_D} \right)^2 + \frac{4 C_F^2}{\tau} \right) \eta_C^2
      + \eta_F^2 + 4 C_F^2 \eta_P^2
    \end{split}
    \label{eq:total_2}
  \end{equation}
  which finishes the proof.
\end{proof}

\section{Reconstruction of stress and flux}

\label{sec-reconstruction}

Our reconstructed stress $\btheta_h^R \in \bTheta_h^R$ and flux $\bw_h^R \in \bW_h^R$ will be constructed in the spaces
$\bTheta_h^R \subset H (\div,\Omega)^d$ and $\bW_h^R \subset H (\div,\Omega)$ given by the next-to-lowest order Raviart-Thomas spaces
$RT_1^d$ and $RT_1$, respectively (see \cite[Sect. 2.3.1]{BofBreFor:13}).
They may be computed as a correction $\btheta_h^R = \btheta (\bu_h,p_h,\varphi_h) + \btheta_h^\Delta$ and
$\bw_h^R = \bw (\varphi_h) + \bw_h^\Delta$,
respectively. The corrections $\btheta_h^\Delta$ and $\bw_h^\Delta$ are contained in the broken Raviart-Thomas space
\begin{equation}
  \begin{split}
    \bTheta_h^{\Delta} & = \{ \bxi_h \in L^2(\Omega)^{d \times d} : \left. \bxi_h \right|_T \in RT_1 (T)^d \} \: , \\
    \bW_h^{\Delta} & = \{ \bxi_h \in L^2(\Omega)^d : \left. \bxi_h \right|_T \in RT_1 (T) \} \: .
  \end{split}
\end{equation}
Associated with a triangulation $\cT_h$, we denote by $\cS_h$ the set of all interior sides (edges in 2D and faces in 3D).
For each $\btheta_h^\Delta \in \bTheta_h^\Delta$ (and similarly for $\bw_h^\Delta \in \bW_h^\Delta$) and each interior side $S \in \cS_h$, we define the
jump
\begin{equation}
  \llbracket \btheta_h^\Delta \cdot \bn \rrbracket_S
  = \left. \btheta_h^\Delta \cdot \bn \right|_{T_-} - \left. \btheta_h^\Delta \cdot \bn \right|_{T_+} \: ,
  \label{eq:definition_jump}
\end{equation}
where $\bn$ is the normal direction associated with $S$ (depending on its orientation) and $T_+$ and $T_-$ are the elements
adjacent to $S$ (such that $\bn$ points into $T_+$). We further define $\bZ_h$ or $Z_h$ as the space of
discontinuous $d$-dimensional/scalar vector functions which are piecewise polynomial of degree $1$.
Similarly, $\bQ_h$ stands for the
continuous $d (d-1)/2$-dimensional vector functions which are piecewise polynomial of degree $1$. For every
$d (d-1)/2$-dimensional vector $\btheta$ we define $\bJ^d (\btheta)$ by 
\begin{equation}
  \bJ^2(\theta) := \begin{pmatrix} 0 & \theta \\ -\theta & 0 \end{pmatrix}, 
  \quad \quad \bJ^3(\btheta) :=
  \begin{pmatrix} 0 & \theta_3 & -\theta_2 \\ -\theta_3 & 0 & \theta_1 \\ \theta_2 & -\theta_1 & 0 \end{pmatrix}
\label{eq:skew_symmetric_tensor}
\end{equation}
(cf. \cite[Sect. 9.3]{BofBreFor:13}). Finally, the broken inner product 
\begin{equation}
 ( \: \cdot \: , \: \cdot \: )_h := \sum_{T \in \cT_h} ( \: \cdot \: , \: \cdot \: )_T \: ,
 \label{eq:broken_inner_product}
\end{equation}
will be used, where $( \: \cdot \: , \: \cdot \: )_T$ is the $L^2(T)$ inner product. 

Following the general idea of equilibration (cf. \cite{BraSch:08}) and extending it to the
case of weakly symmetric stresses, the construction is done in the following way:
Compute $\btheta_h^\Delta \in \bTheta_h^\Delta$ in such a way that 
\begin{equation}
  \begin{split}
    ( \div \: \btheta_h^\Delta , \bz_h )_{h}
    & = - ( \bff - \nabla \varphi_h + \div \: \btheta (\bu_h,p_h,\varphi_h) , \bz_h )_{h} \mbox{ for all } \bz_h \in \bZ_h \: , \\
    \langle \llbracket \btheta_h^\Delta \cdot \bn \rrbracket_S , \bzeta \rangle_S
    & = - \langle \llbracket \btheta (\bu_h,p_h) \cdot \bn \rrbracket_S , \bzeta \rangle_S \mbox{ for all }
    \bzeta \in P_1 (S)^d \: , \: S \in \cS_h \: , \\
    ( \btheta_h^\Delta , \bJ^d (\bgamma_h) ) & = 0 \mbox{ for all } \bgamma_h \in \bQ_h \: .
  \end{split}
  \label{eq:equilibration_conditions_stress}
\end{equation}

Due to our specific choice of $\bZ_h$, the first equation in (\ref{eq:equilibration_conditions_stress}) implies that, on
each $T \in \cT_h$, $\div \: \btheta_h^\Delta = - \bff + \nabla \varphi_h - \div \: \btheta_h (\bu_h,p_h,\varphi_h)$ holds.

Similarly, the flux correction $\bw_h^\Delta$ is computed in such a way that
\begin{equation}
  \begin{split}
    ( \tau \: \div \: \bw_h^\Delta , z_h )_{h}
    & = ( g - \tau \: \div \: \bw (\varphi_h) + \frac{p_h - P_h^1 \varphi_h}{\lambda} , z_h )_{h} \mbox{ for all } z_h \in Z_h \: , \\
    \langle \llbracket \bw_h^\Delta \cdot \bn \rrbracket_S , \bzeta \rangle_S
    & = - \langle \llbracket \bw (\varphi_h) \cdot \bn \rrbracket_S , \bzeta \rangle_S \mbox{ for all }
    \bzeta \in P_1 (S) \: , \: S \in \cS_h \: .
  \end{split}
  \label{eq:equilibration_conditions_flux}
\end{equation}

Both $\btheta_h^\Delta$ and $\bw_h^\Delta$ can be constructed locally on vertex patches using the algorithm from Braess and Sch\"oberl
\cite{BraSch:08} for the flux and its weakly symmetric extension from \cite{BerKobMolSta:19} for the stress.

\begin{thm}
  Let $\btheta_h^R \in \bTheta_h^R$ be a stress reconstruction satisfying the weak symmetry condition
  $( \btheta_h^R , \bJ^d (\bgamma_h) ) = 0$ for all $\bgamma_h \in \bQ_h$. Then, for all $\bv \in H_0^1 (\Omega)^d$,
  \begin{equation}
    \left| ( \bas \: \btheta_h^R , \bnabla \bv ) \right|
    \leq C_K \| \bas \: \btheta_h^R \| \: \| \bepsilon ( \bv ) \|
    \label{eq:local_asym}
  \end{equation}
  holds with a constant $C_K$ which depends only on (the largest interior angle in) the triangulation $\cT_h$.
  
  Moreover, for any $s \in L^2 (\Omega)$ with $( s , q_h ) = 0$ for all $q_h \in Q_h$ (the space of conforming piecewise linear functions),
  \begin{equation}
    \left| ( \tr ( \btheta (\bu,p,{\color{black} \varphi})) - \btheta_h^R ) , s ) \right|
    \leq C_D {\color{black} \mu^{1/2}} \| \btheta (\bu,p,{\color{black} \varphi})) - \btheta_h^R \|_{\cA} \: \| s \|
    \label{eq:local_deviator}
  \end{equation}
  holds, where $C_D$ again depends only on (the largest interior angle in) the triangulation $\cT_h$.
  \label{thm-local_bounds}
\end{thm}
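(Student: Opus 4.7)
The plan is to prove the two inequalities (\ref{eq:local_asym}) and (\ref{eq:local_deviator}) separately, each by a Cl\'ement-type quasi-interpolation on vertex patches combined with a local Korn-type (respectively inf-sup) inequality.

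For (\ref{eq:local_asym}), since the inner product of a skew tensor with a symmetric one vanishes, $(\bas\,\btheta_h^R,\bnabla\bv)=(\bas\,\btheta_h^R,\bas\,\bnabla\bv)$. Writing $\bas\,\bnabla\bv = \bJ^d(\bomega(\bv))$ with $\bomega(\bv)$ the $d(d-1)/2$-dimensional vector representing half the curl of $\bv$, the weak symmetry assumption $(\btheta_h^R,\bJ^d(\bgamma_h))=0$ implies $(\bas\,\btheta_h^R,\bJ^d(\bgamma_h))=0$ for every $\bgamma_h\in\bQ_h$, so that
\[
(\bas\,\btheta_h^R,\bnabla\bv)=(\bas\,\btheta_h^R,\bJ^d(\bomega(\bv)-\bgamma_h)).
\]
After Cauchy--Schwarz and the isometry $\|\bJ^d(\bzeta)\|^2 = 2\|\bzeta\|^2$ it suffices to select $\bgamma_h\in\bQ_h$ with $\|\bomega(\bv)-\bgamma_h\|\leq C_K\|\bepsilon(\bv)\|$. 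I would take $\bgamma_h$ to be a Scott--Zhang quasi-interpolant whose vertex values are suitable local averages on vertex patches; the required local bound on each patch $\omega_z$ is the classical Korn inequality, whose kernel (infinitesimal rigid motions) corresponds to constant $\bomega(\bv)$ that is absorbed by $\bgamma_h|_{\omega_z}\in\bQ_h$. Summing over patches with the finite-overlap property yields (\ref{eq:local_asym}) with $C_K$ determined solely by the shape regularity of $\cT_h$.

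For (\ref{eq:local_deviator}), set $\bxi:=\btheta(\bu,p,\varphi)-\btheta_h^R$. The hypothesis $(s,q_h)=0$ for $q_h\in Q_h$ lets me write $(\tr\bxi,s)=(\tr\bxi-q_h,s)$ for any $q_h\in Q_h$, so by Cauchy--Schwarz the task reduces to producing $q_h$ with $\|\tr\bxi-q_h\|\leq C_D\,\mu^{1/2}\|\bxi\|_\cA$. The algebraic identity
\[
\|\bxi\|_\cA^2=\frac{1}{2\mu}\|\bdev\,\bxi\|^2+\frac{1}{d(2\mu+d\lambda)}\|\tr\bxi\|^2
\]
shows that $\mu^{1/2}\|\bxi\|_\cA\geq \tfrac{1}{\sqrt{2}}\|\bdev\,\bxi\|$, so it is enough to bound $\inf_{q_h\in Q_h}\|\tr\bxi-q_h\|$ by $\|\bdev\,\bxi\|$ with a shape-regularity constant. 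Because $\bxi\in H(\div)^d$ and $\btheta_h^R$ is weakly symmetric (while $\btheta(\bu,p,\varphi)$ is symmetric), this is a Stenberg-type patch-local inf-sup: dually, for $s$ in the $Q_h$-annihilator a Bogovskii lifting produces $\bv\in H_0^1(\omega_z)^d$ on each vertex patch with $\div\bv=s$ and $\|\bepsilon(\bv)\|\leq C\|s\|$, and integration by parts against $\bxi$ transfers the pairing to one involving only $\bdev\,\bxi$.

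The main obstacle is the second claim: securing the $\lambda$-independent $\mu^{1/2}$ scaling requires that the weakly symmetric $H(\div)$ structure of $\bxi$ interact with the $Q_h$-orthogonality through a patch-local inf-sup, a weakly symmetric analogue of the classical Brezzi inf-sup for Hellinger--Reissner mixed methods. Once this local inf-sup is established, the dependence of $C_D$ on the shape regularity of $\cT_h$ follows from the corresponding patch-local constants and the finite-overlap property used to assemble the global estimate.
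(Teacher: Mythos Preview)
Your treatment of (\ref{eq:local_asym}) is correct and essentially identical to the paper's: both use the weak symmetry to subtract a piecewise-linear skew tensor, localise to vertex patches via the hat-function partition of unity, and invoke the patch-local Korn inequality. The paper simply takes $\bgamma_h=\sum_z\bgamma_z\phi_z$ with \emph{constant} $\bgamma_z\in\R^{d(d-1)/2}$ rather than a Scott--Zhang interpolant, but this is the same mechanism.

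For (\ref{eq:local_deviator}) your overall framework (localise via the partition of unity, then establish a patch-local bound of $\tr\bxi$ by $\bdev\bxi$) is right, and indeed the Bogovskii argument you sketch is how one proves the dev-div lemma that the paper invokes. However, you have misidentified the structural property that makes it work. Weak symmetry of $\btheta_h^R$ plays no role in (\ref{eq:local_deviator}); what is essential is that the reconstruction satisfies the momentum balance (\ref{eq:momentum_balance}), so that $\div\bxi=\div(\btheta(\bu,p,\varphi)-\btheta_h^R)$ vanishes. In your Bogovskii calculation, integration by parts gives
\[
(\tr\bxi,\div\bv)_{\omega_z}=d\,(\bxi,\nabla\bv)_{\omega_z}-d\,(\bdev\bxi,\nabla\bv)_{\omega_z}
=-d\,(\div\bxi,\bv)_{\omega_z}-d\,(\bdev\bxi,\nabla\bv)_{\omega_z},
\]
and the first term on the right disappears only because $\div\bxi=\bzero$ on $\omega_z$; without this you cannot transfer the pairing to $\bdev\bxi$ alone. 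The paper states this explicitly and then applies the dev-div lemma \cite[Prop.~9.1.1]{BofBreFor:13} on each patch (after subtracting the patch mean $\alpha_z$ of $\tr\bxi$, which your $q_h\in Q_h$ also provides), obtaining $\|\tr\bxi-\alpha_z\|_{\omega_z}\le C_{D,z}\|\bdev\bxi\|_{\omega_z}$ directly. Your ``Stenberg-type inf-sup / weakly symmetric Hellinger--Reissner'' framing is a red herring here: replace it by the observation $\div\bxi=\bzero$ and your Bogovskii route becomes a valid re-derivation of the dev-div lemma, after which the algebraic inequality $\|\bdev\bxi\|\le\sqrt{2\mu}\,\|\bxi\|_\cA$ (which you already noted) finishes the proof exactly as in the paper.
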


\begin{proof}
  The partition of unity
  \begin{equation}
    1 \equiv \sum_{z \in \cV_h} \phi_z \mbox{ on } \Omega
    \label{eq:partition_all}
  \end{equation}
  with respect to the standard hat functions $\phi_z$ associated with the set of all vertices $\cV_h$ in the triangulation $\cT_h$ is used for both inequalities
  (\ref{eq:local_asym}) and (\ref{eq:local_deviator}). This gives rise to the overlapping decomposition of $\Omega$ into the set of vertex patches
  $\omega_z = {\rm supp}\ \phi_z$ for $z \in \cV_h$.
  In order to prove (\ref{eq:local_asym}), the weak symmetry of the reconstructed stress $\btheta_h^R$ implies
  \begin{equation}
    ( \bas \: \btheta_h^R , \bas \: \bnabla \bv ) = ( \bas \: \btheta_h^R , \bas \: \bnabla \: \bv - \bJ^d (\bgamma_h) )
    \mbox{ for all } \bgamma_h = \sum_{z \in \cV_h} \bgamma_z \phi_z
  \end{equation}
  with $\bgamma_z \in \R^{d (d-1)/2}$. From (\ref{eq:partition_all}) we obtain
  \begin{equation}
    \begin{split}
      | ( \bas \: \btheta_h^R , \bnabla \bv ) |
      & = | ( \bas \: \btheta_h^R , \sum_{z \in \cV_h} \left( \bas \: \bnabla \bv - \bJ^d (\bgamma_z) \right) \phi_z ) | \\
      & = \left| \sum_{z \in \cV_h} ( \bas \: \btheta_h^R ,
      \left( \bas \: \bnabla \bv - \bJ^d (\bgamma_z) \right) \phi_z )_{\omega_z} \right| \\
      & = \left| \sum_{z \in \cV_h} ( (\bas \: \btheta_h^R) \phi_z , \bas \: \bnabla \bv - \bJ^d (\bgamma_z) )_{\omega_z} \right| \\
      & \leq \sum_{z \in \cV_h} \| (\bas \: \btheta_h^R) \phi_z \|_{\omega_z} \| \bas \: \bnabla \bv - \bJ^d (\bgamma_z) \|_{\omega_z} \\
      & \leq \sum_{z \in \cV_h} \| \bas \: \btheta_h^R \|_{\omega_z} \| \bas \: \bnabla \bv - \bJ^d (\bgamma_z) \|_{\omega_z} \: .
    \end{split}
    \label{eq:asym_partition}
  \end{equation}
  All rigid body modes $\brho \in \bR\bM$ satisfy $\bnabla \brho = \bJ^d (\bgamma_z)$ with some $\bgamma_z \in \R^{d (d-1)/2}$
  and therefore
  \begin{equation}
    \inf_{\bgamma_z} \| \bas \: \bnabla \bv - \bJ^d (\bgamma_z) \|_{\omega_z}
    \leq \inf_{\brho \in \bR\bM} \| \bas \: \bnabla (\bv - \brho) \|_{\omega_z}
    \leq C_{K,z} \| \bepsilon (\bv) \|_{\omega_z}
    \label{eq:Korn_average}
  \end{equation}
  due to Korn's inequality (cf. \cite{Hor:95}). The constant $C_{K,z}$ obviously only depends on the geometry of the vertex patch
  $\omega_z$ or, more precisely, on its largest interior angle. If we define $C_K = (d+1) \max \{ C_{K,z} : z \in \cV_h \}$, we finally
  obtain from (\ref{eq:asym_partition}) that
  \begin{equation}
    \begin{split}
      | ( \bas \: \btheta_h^R , & \bas \: \bnabla \bv ) |
      \leq \frac{C_K}{d+1} \sum_{z \in \cV_h} \| \bas \: \btheta_h^R \|_{\omega_z} \| \bepsilon (\bv ) \|_{\omega_z} \\
      & \leq C_K \left( \frac{1}{d+1} \sum_{z \in \cV_h} \| \bas \: \btheta_h^R \|_{\omega_z}^2 \right)^{1/2}
      \left( \frac{1}{d+1} \sum_{z \in \cV_h} \| \bepsilon (\bv) \|_{\omega_z}^2 \right)^{1/2} \\
      & = C_K \| \bas \: \btheta_h^R \| \: \| \bepsilon (\bv) \|
    \end{split}
  \end{equation}
  holds, where we used the fact that each element (triangle or tetrahedron) is contained in exactly $d+1$ vertex patches.

  In order to prove (\ref{eq:local_deviator}), the assumption that $s$ is perpendicular to $Q_h$ implies
  \begin{equation}
    \begin{split}
      ( \tr (\btheta (\bu,p,{\color{black} \varphi})) - \btheta_h^R) , s ) & = ( \tr (\btheta (\bu,p,{\color{black} \varphi})) - \btheta_h^R) - \alpha_h , s ) \\
      & \mbox{ for all } \alpha_h = \sum_{z \in \cV_h} \alpha_z \phi_z \: , \: \alpha_z \in \R \: .
    \end{split}
  \end{equation}
  Using the partition of unity (\ref{eq:partition_all}) once more, we obtain
  \begin{equation}
    \begin{split}
      ( \tr (\btheta (\bu,p,{\color{black} \varphi})) - \btheta_h^R) , s )
      & = \sum_{z \in \cV_h} ( (\tr (\btheta (\bu,p,{\color{black} \varphi})) - \btheta_h^R) - \alpha_z) \phi_z , s )_{\omega_z} \\
      & = \sum_{z \in \cV_h} ( \tr (\btheta (\bu,p,{\color{black} \varphi})) - \btheta_h^R) - \alpha_z , s )_{\omega_z} .
    \end{split}
    \label{eq:volum_partition}
  \end{equation}
  
  We choose $\alpha_z$ in such a way that $( \tr (\btheta (\bu,p,{\color{black} \varphi})) - \btheta_h^R) - \alpha_z , 1 )_{\omega_z} = 0$
  and use the ``dev-div lemma'' (cf. \cite[Prop. 9.1.1]{BofBreFor:13}) to obtain
  \begin{equation}
    \| \tr \: (\btheta (\bu,p,{\color{black} \varphi})) - \btheta_h^R) - \alpha_z \|_{\omega_z}
    \leq C_{D,z} \| \bdev  (\btheta (\bu,p,{\color{black} \varphi})) - \btheta_h^R) \|_{\omega_z} \: ,
    \label{eq:dev_div_average}
  \end{equation}
  where $\bdev \: \bxi = \bxi - (1/d) (\tr \: \bxi) \: \bI$ denotes the deviatoric part of \bxi
  (note that $\div (\btheta (\bu,p,{\color{black} \varphi})) - \btheta_h^R) = \bzero$ is satisfied). Obviously, $C_{D,z}$ depends only on the shape of $\omega_z$.
  This leads to
  \begin{equation}
    \begin{split}
      \left| ( \tr (\btheta (\bu,p,{\color{black} \varphi})) - \btheta_h^R) - \alpha_z , s \phi_z )_{\omega_z} \right|
      & \leq \| \tr (\btheta (\bu,p,{\color{black} \varphi})) - \btheta_h^R) - \alpha_z \|_{\omega_z} \| s \phi_z \|_{\omega_z} \\
      & \leq C_{D,z} \| \bdev (\btheta (\bu,p,{\color{black} \varphi})) - \btheta_h^R) \|_{\omega_z} \| s \|_{\omega_z} \: .
    \end{split}
  \end{equation}
  Setting $C_D = \sqrt{2} (d+1) \max \{ C_{D,z} : z \in \cV_h \}$, we obtain from (\ref{eq:volum_partition}) that
  \begin{equation}
    \begin{split}
      & | ( \tr (\btheta (\bu,p,{\color{black} \varphi})) - \btheta_h^R) , s ) |
      \leq \sum_{z \in \cV_h} C_{D,z} \| \bdev (\btheta (\bu,p,{\color{black} \varphi})) - \btheta_h^R) \|_{\omega_z} \| s \|_{\omega_z} \\
      & \leq \frac{C_D}{d+1} \left( \sum_{z \in \cV_h} \frac{1}{2 \mu} \| \bdev (\btheta (\bu,p,{\color{black} \varphi})) - \btheta_h^R) \|_{\omega_z}^2 \right)^{1/2}
      \left( \sum_{z \in \cV_h} \| s \|_{\omega_z}^2 \right)^{1/2} \\
      & = C_D \frac{1}{\sqrt{2}} \| \bdev (\btheta (\bu,p,{\color{black} \varphi})) - \btheta_h^R) \| \: \| s \|
      \leq C_D {\color{black} \mu^{1/2}} \| \btheta (\bu,p,{\color{black} \varphi})) - \btheta_h^R \|_\cA \: \| s \|
    \end{split}
  \end{equation}
  holds, where the inequality
  \begin{equation}
    \| \bdev (\btheta (\bu,p,{\color{black} \varphi})) - \btheta_h^R) \| \leq \sqrt{2 \mu} \| \btheta (\bu,p,{\color{black} \varphi})) - \btheta_h^R \|_\cA
  \end{equation}
  was used.
\end{proof}

\begin{figure}[t]
    \centering
    \includegraphics[width=\textwidth]{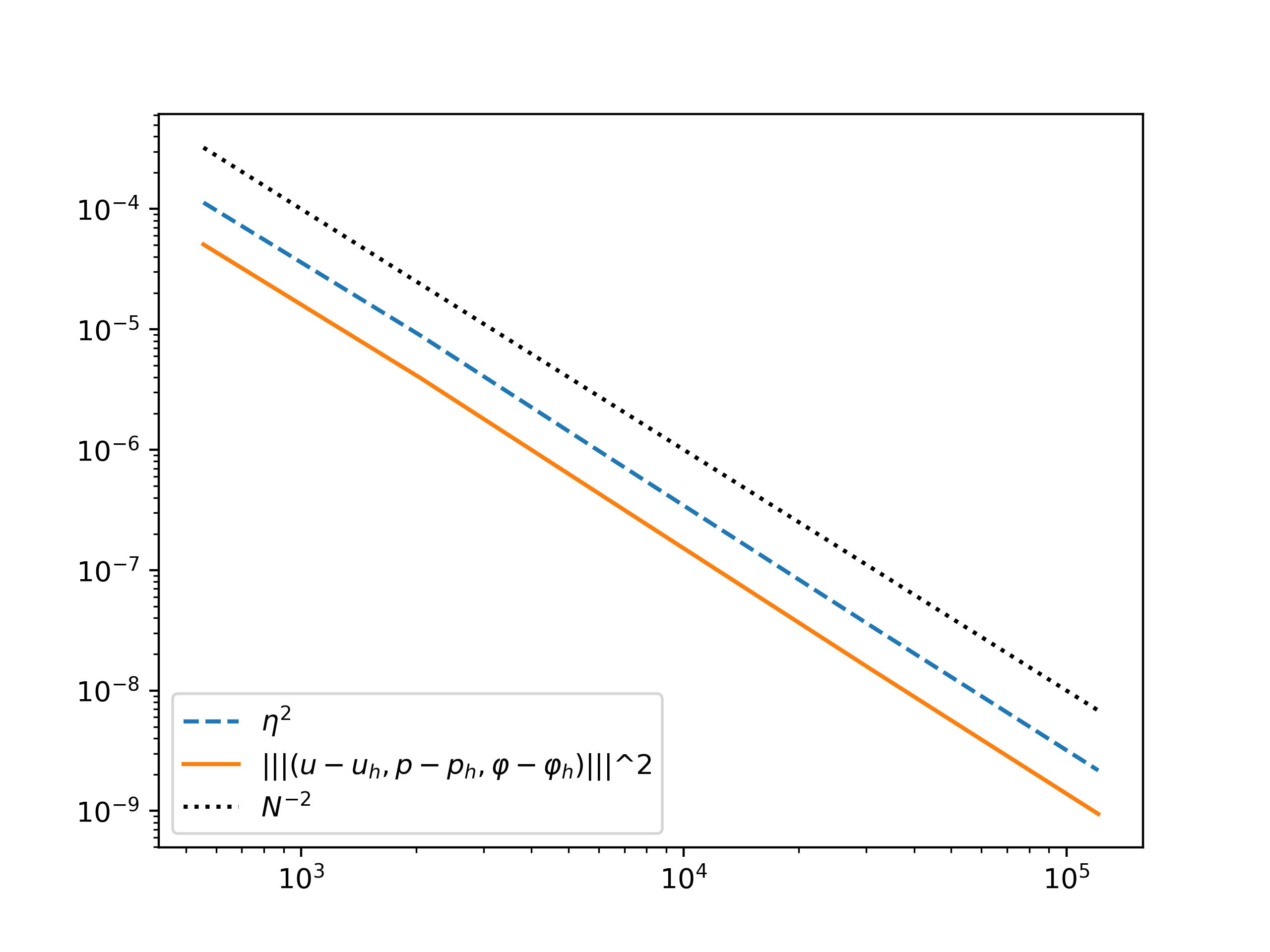}
    \caption{Comparison of  error estimator and error (regular solution)}
    \label{fig:sums-uniform}
\end{figure}

\begin{remark}
  Theorem \ref{thm-local_bounds} finally justifies the use of (\ref{eq:local_bounds_both}) in the proof of Lemma \ref{lem-stress_bound}
  since $\bu - \bu_h \in H_0^1 (\Omega)^d$ and since
  \begin{equation}
    ( \frac{p_h - \varphi_h}{\lambda} + {\rm div} \: \bu_h , q_h ) = 0 \mbox{ for all } q_h \in Q_h
    \label{eq:perpendicular}
  \end{equation}
  follows from the second equation in (\ref{eq:Biot_system_discrete}).
\end{remark}

\begin{remark}
For the proof of local efficiency of the error estimator, one would proceed in the usual way by bounding
\begin{align}
\label{eq:est}
    \eta = \left( \eta_S^2+\eta_A^2+\eta_C^2+\eta_F^2+\eta_P^2 \right)^{1/2}
\end{align}
from above with suitable residual estimator quantities. This can be done following the lines of the proof in \cite{BerKobMolSta:19} using the vertex
patch  decompositions
\(\left\|\boldsymbol{\theta}_{h}^{\Delta}\right\| \leq \sum_{z \in \mathcal{V}_{h}}
\left\|\boldsymbol{\theta}_{h, z}^{\Delta}\right\| \omega_{z}\)
and
\(\left\|\boldsymbol{w}_{h}^{\Delta}\right\| \leq 
\sum_{z \in \mathcal{V}_{h}}
\left\|\boldsymbol{w}_{h, z}^{\Delta}\right\| \omega_{z} \ .\)
While the local efficiency of the arising residual estimators is known for the elasticity and flow subproblems, the term $\eta_C$
measuring the constraint seems to require special care.
\end{remark}

\begin{figure}[t]
    \centering
    \includegraphics[width=\textwidth]{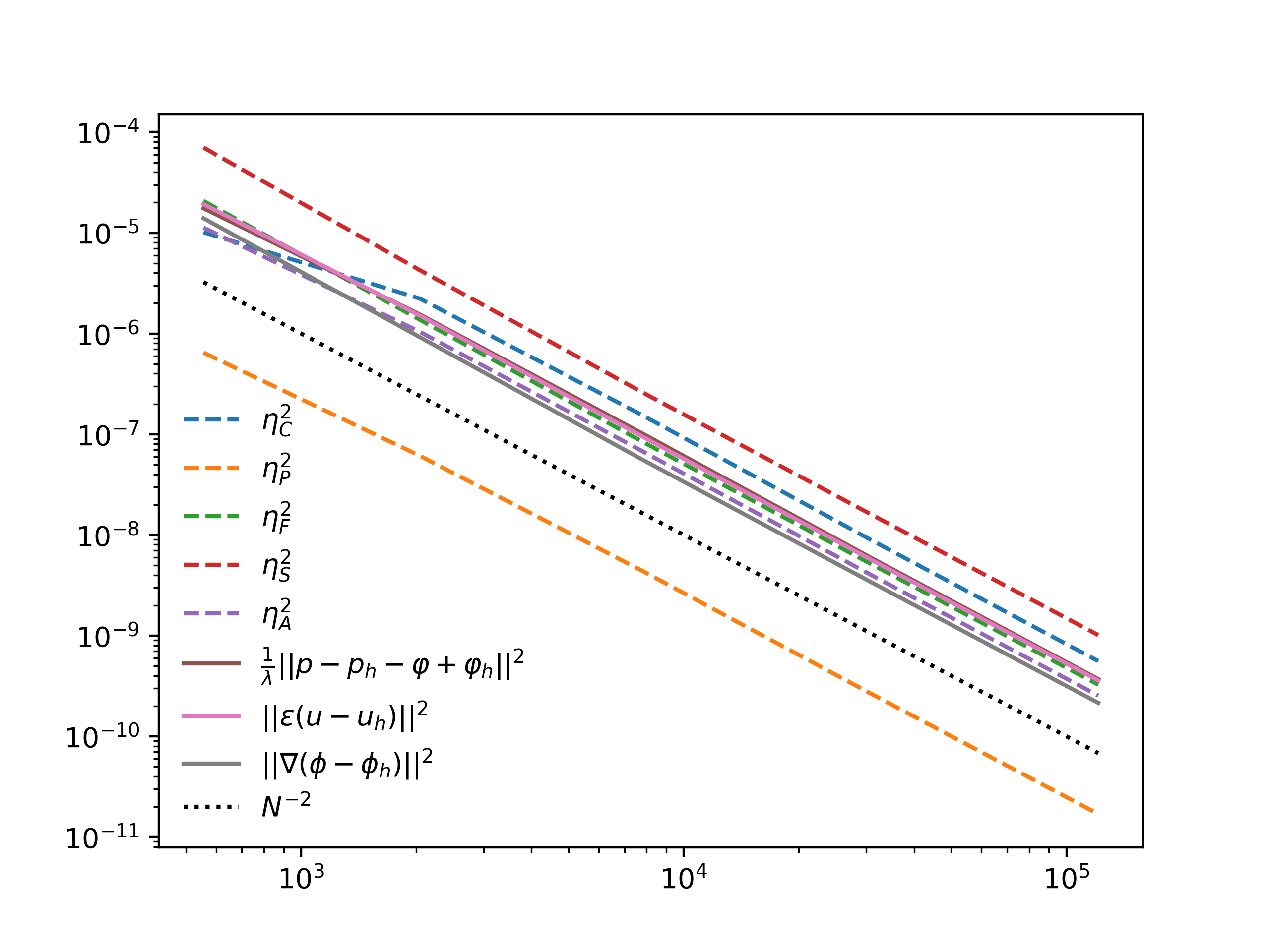}
    \caption{Comparison of error estimator and error (regular solution)}
    \label{fig:uniform}
\end{figure}

\section{Numerical Results}

In this section, the theoretical results are illustrated on several test cases. The first one is designed to possess a regular analytical solution on the unit
square $\Omega = [0,1]^2$ combined with zero boundary conditions on $\partial \Omega$ for both the displacements $\Disp$ and the fluid-pressure
$\varphi$. Choosing $\varphi(x,y) = xy(1-x)(1-y)$ and 
$\Disp(x,y) = (\varphi(x,y),\varphi(x,y))^\top$ leads to
$$p(x,y) = -\lambda
\left(\frac{\partial}{\partial x}
\varphi(x,y)
+
\frac{\partial}{\partial y}
\varphi(x,y)
\right)+\varphi(x,y) \ .$$
The source terms corresponding to these exact solutions, see \ref{fig:uniform-solutions},  are now given by
\begin{align*}
    {\bff} = \nabla \varphi-
    \begin{pmatrix}
     (3\mu+\lambda) \frac{\partial^2}{\partial x^2} \varphi &
     (\mu+\lambda) \frac{\partial^2}{\partial x\partial y} \varphi
     \\
     (\mu+\lambda) \frac{\partial^2}{\partial x\partial y} \varphi
     &(3\mu+\lambda) \frac{\partial^2}{\partial y^2} \varphi
    \end{pmatrix}
\end{align*}
and
\begin{align*}
    {g} = \frac{\partial}{\partial x}
\varphi(x,y)
+
\frac{\partial}{\partial y}
\varphi(x,y)
-\tau\div\nabla\varphi
\end{align*}
We compare the estimated error with the analytical error in figure \ref{fig:sums-uniform}. The estimated error is given by \eqref{eq:est} while the analytical
error is measured for the fluid-pressure, the total-pressure and the displacements in the norms defined by \eqref{eq:energy_norm}.
The graphs show a doubly logarithmic plot in terms of the degrees of freedom $N$ and,
as expected, both curves show the quadratic convergence order. We also compare the convergence rates of each part of the error estimator in figure
\ref{fig:uniform} with each part of the analytical error, and observe the expected quadratic convergence as well.

\begin{figure}[t]
\centering
\begin{tabular}{l l }
    \begin{subfigure}{.6\textwidth}\centering
    \includegraphics[width=\textwidth,trim=24mm 0mm 24mm 5mm]{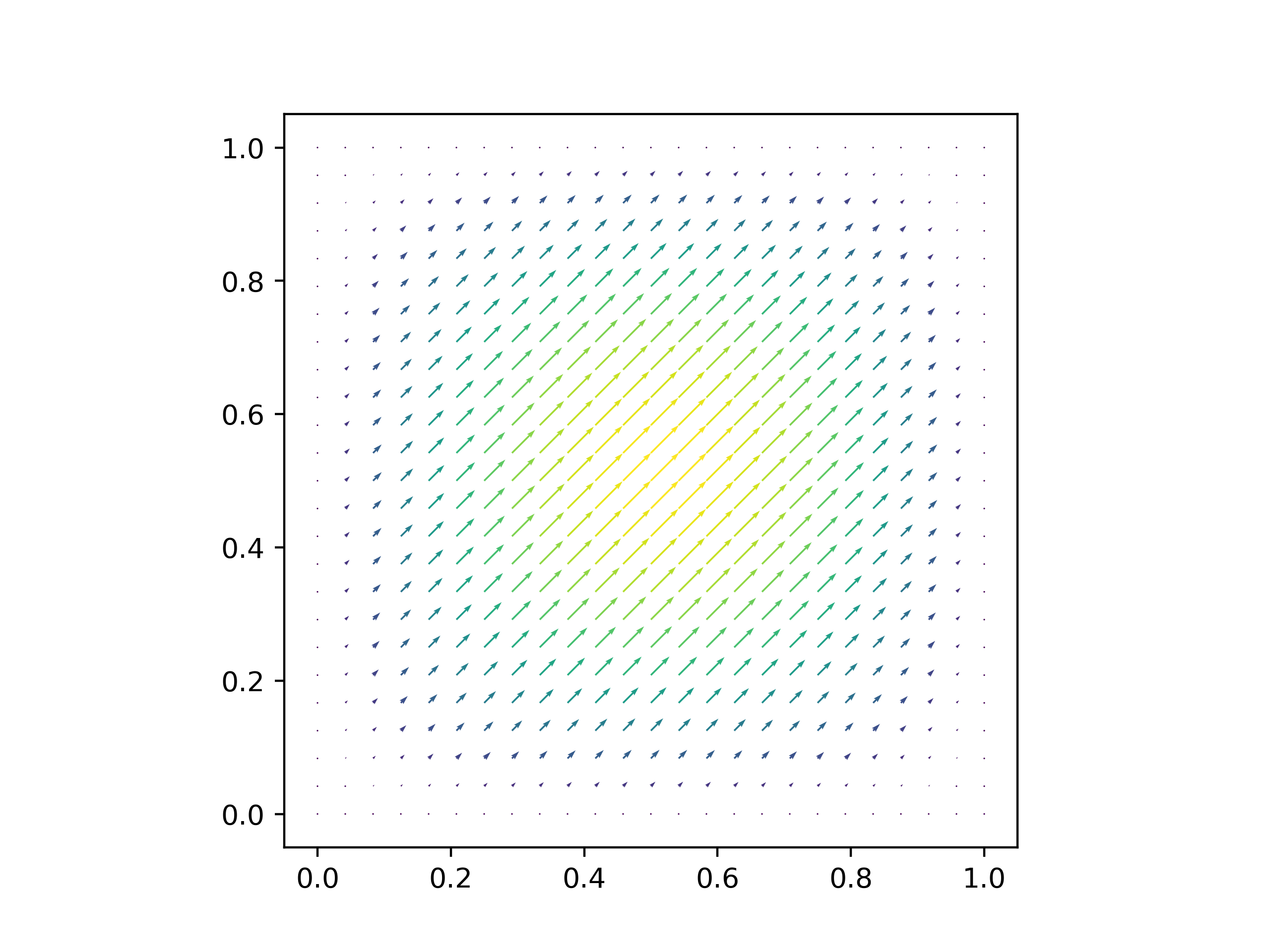}
    \caption{u}
    \end{subfigure}
    $ $\\[-76mm]
    &
    \begin{subfigure}{.3\textwidth}
    \includegraphics[width=\textwidth,trim=24mm 12mm 24mm 5mm, clip]{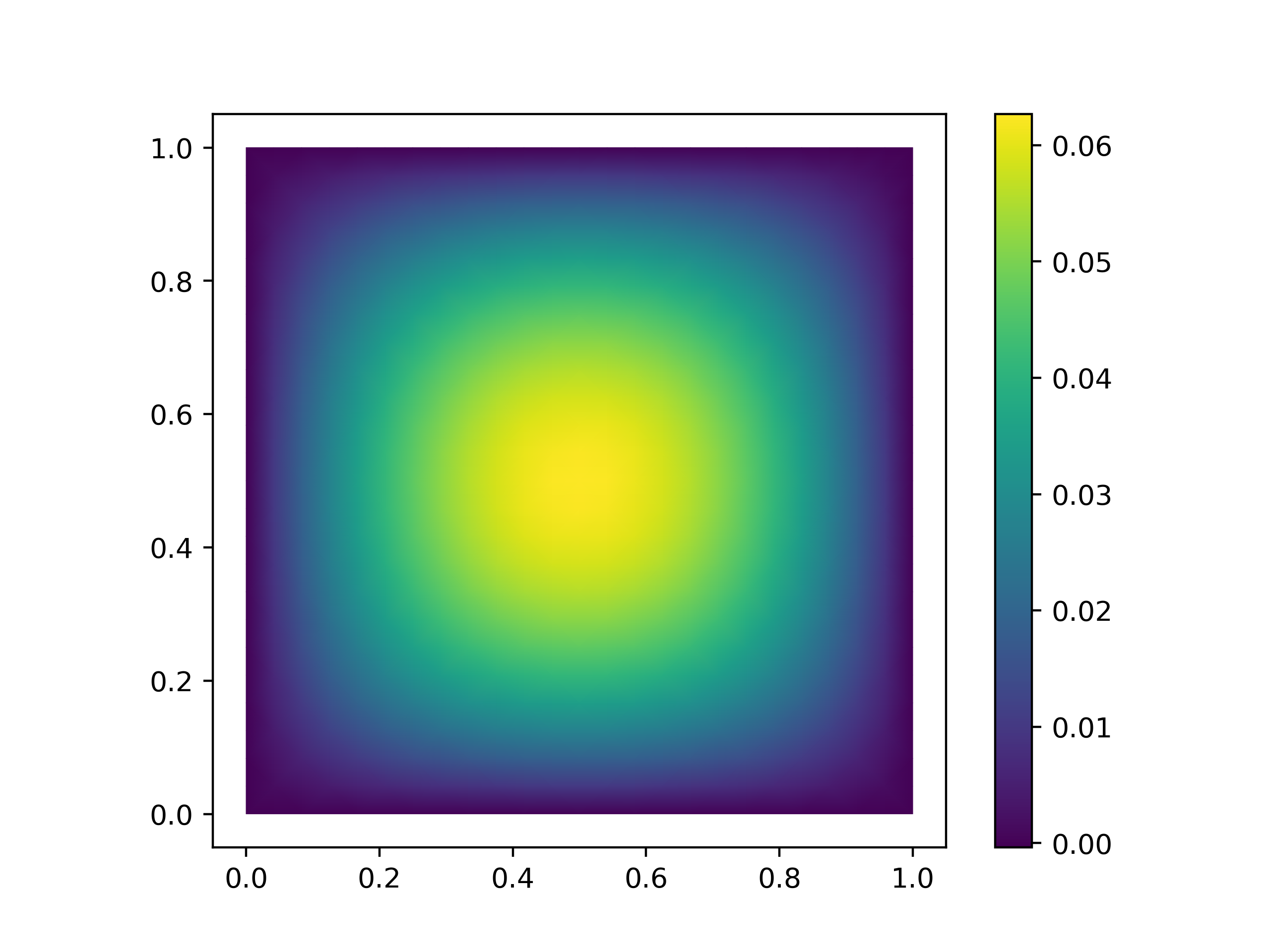}
    \caption{phi}
    \end{subfigure}
\\
    &
    \begin{subfigure}{.3\textwidth}
    \includegraphics[width=\textwidth,trim=24mm 12mm 24mm 5mm, clip]{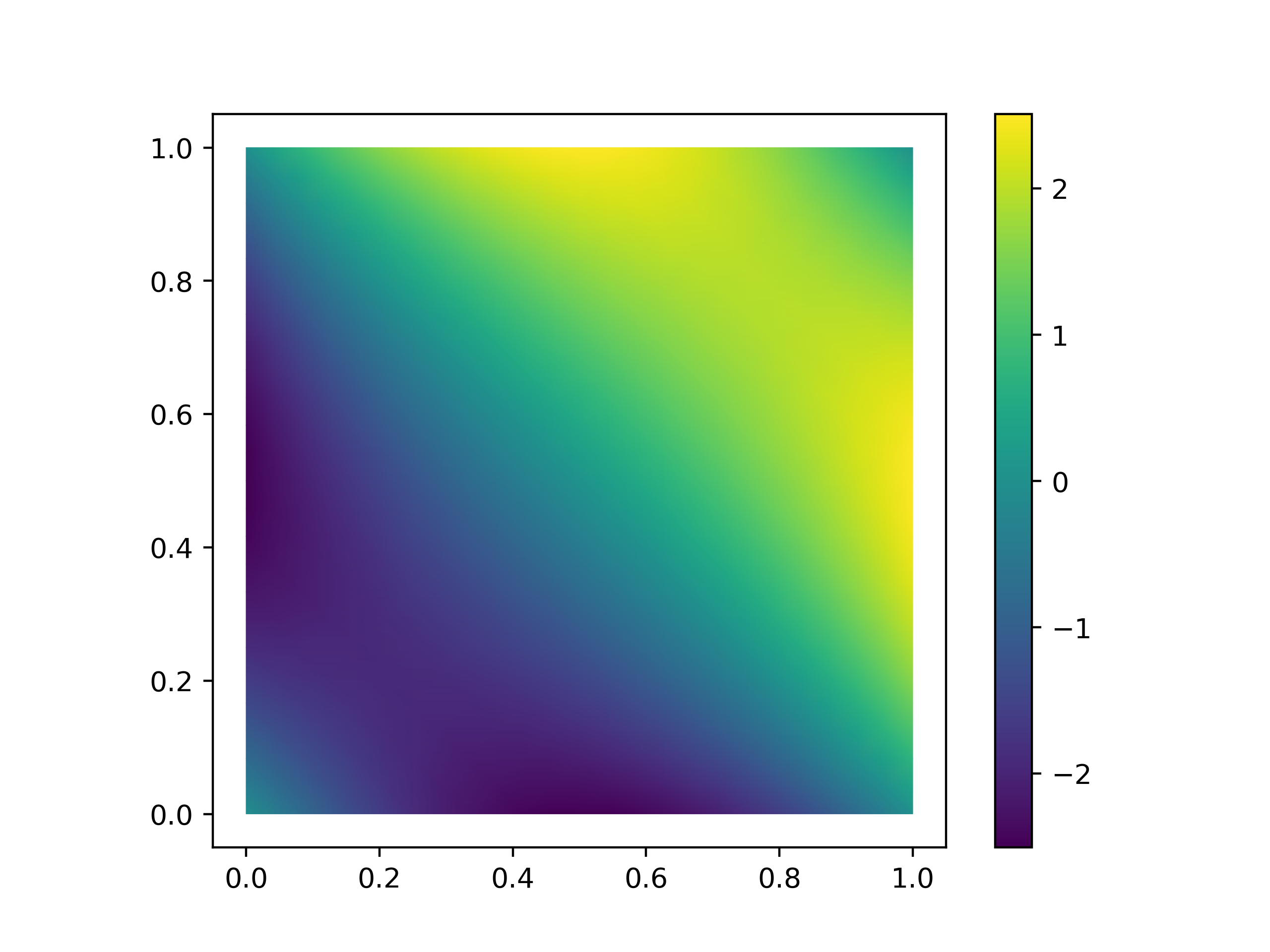}
    \caption{p }
    \end{subfigure}
\end{tabular}
    \caption{Approximations of the regular solution on third uniform refinement}
    \label{fig:uniform-solutions}
\end{figure}

In order to investigate the robustness with respect to the material parameter, we let $\lambda$ vary over several order of magnitude. Note that for this
analytical test, the norm of the exact solution scales with $\lambda$ when $\lambda$ tends to infinity. In fact, we have $\| \varphi \| = 30^{-1}$ and
$\| \nabla  \varphi \|^2 =  20 \|  \varphi \|^2$ as well as 
\[
  \| \epsilon(u)  \|^2 = \frac 3 2  \| \nabla  \varphi \|^2  \ ,
\]
so that 
 \begin{align*}
 ||| (\Disp,\varphi,p)|||^2 = \left( \tau + \frac 3 2 \mu + 20\lambda +2\lambda^{-1}\right) \| \varphi \|^2
\ .  \end{align*}
Nevertheless, we can observe the robustness of the a posteriori error estimates in figure \ref{fig:sums-uniform-tau1}.
\begin{figure}[t]
    \centering
    \includegraphics[width=\textwidth]{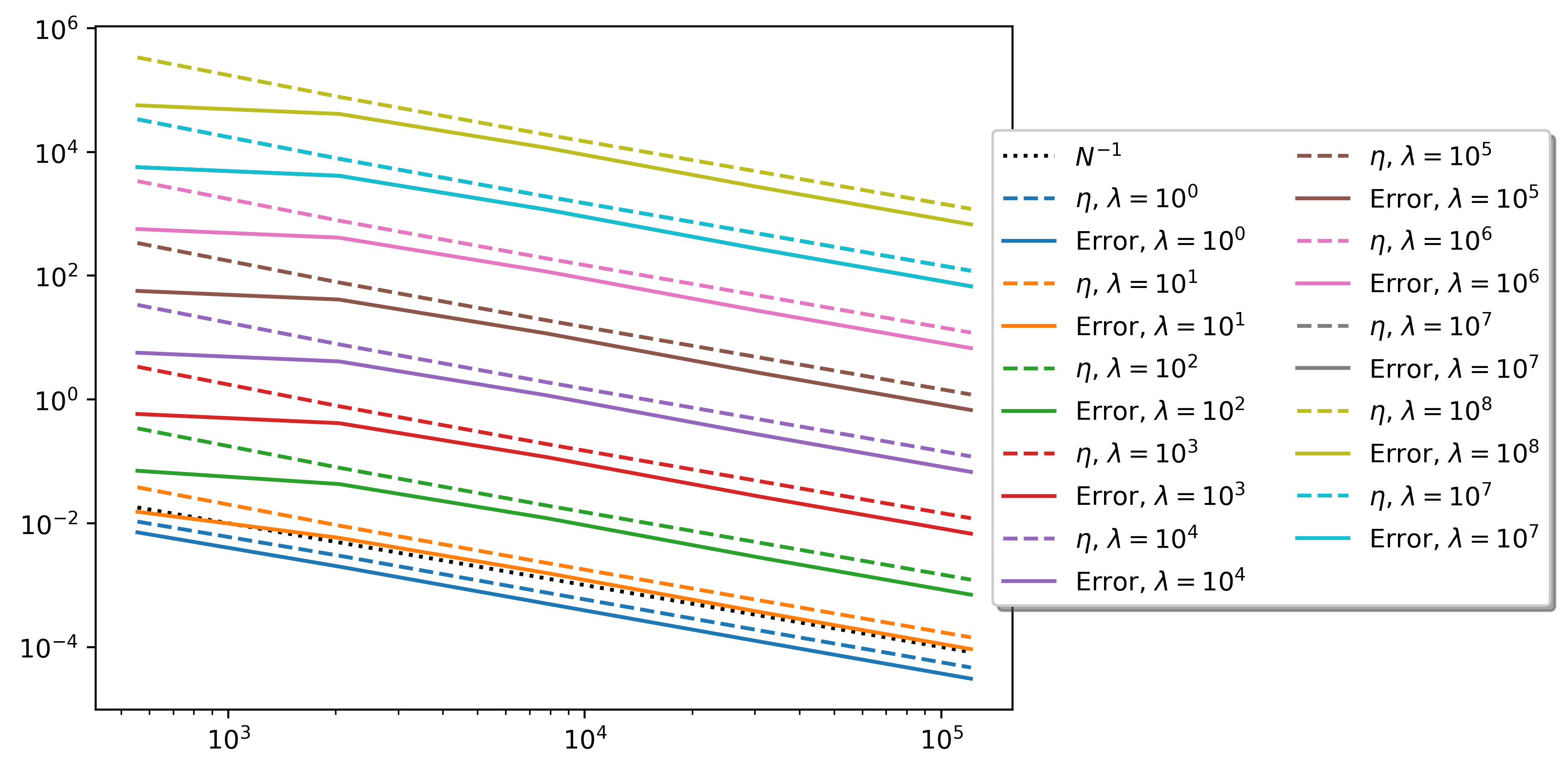}
    \caption{Comparison of error estimator and error for several values of $\lambda$ and $\tau=\mu=1$}
    \label{fig:sums-uniform-tau1}
\end{figure}

The second test case should illustrate the use of the error indicator in an adaptive strategy. Therefore, we consider the L-shaped domain
$\Omega = [-1,1]^2\backslash [0,1]^2$, constant source terms $\bff = (1,1)^\top$ and $g=1$ and impose zero Dirichlet boundary conditions on the fluid
pressure and on the displacements. It is well-known that the fluid pressure and the displacements solution corresponding to these parameter are not
$H^2(\Omega)$-regular. The distribution of the error on the initial mesh for $\mu=\lambda=\tau$ is shown in figure \ref{fig:initialmesh}.
\begin{figure}[t]
    \centering
    \includegraphics[width=0.45\textwidth,trim=20mm 12mm 2mm 5mm]{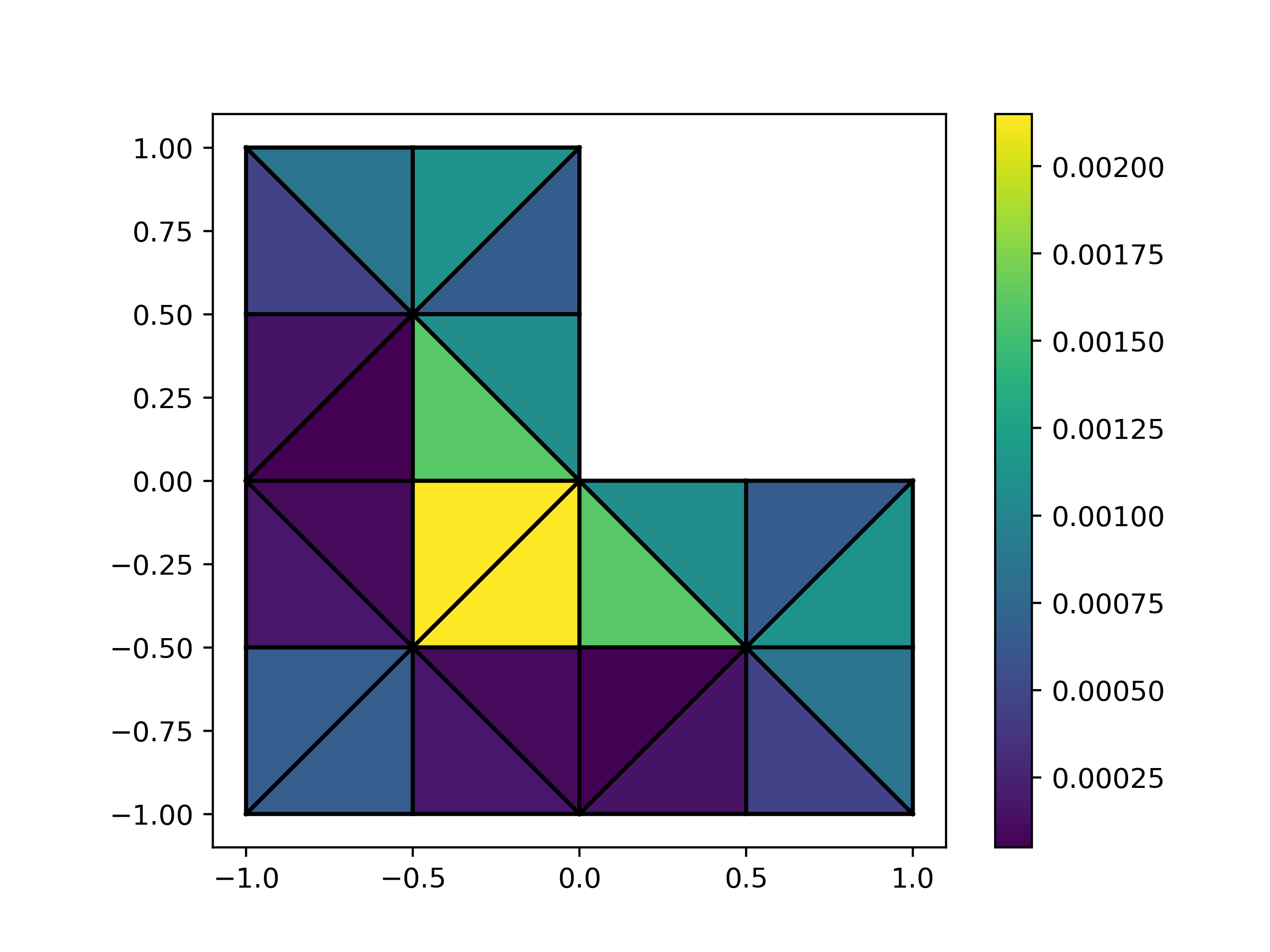}
    \includegraphics[width=0.45\textwidth,trim=20mm 12mm 2mm 5mm]{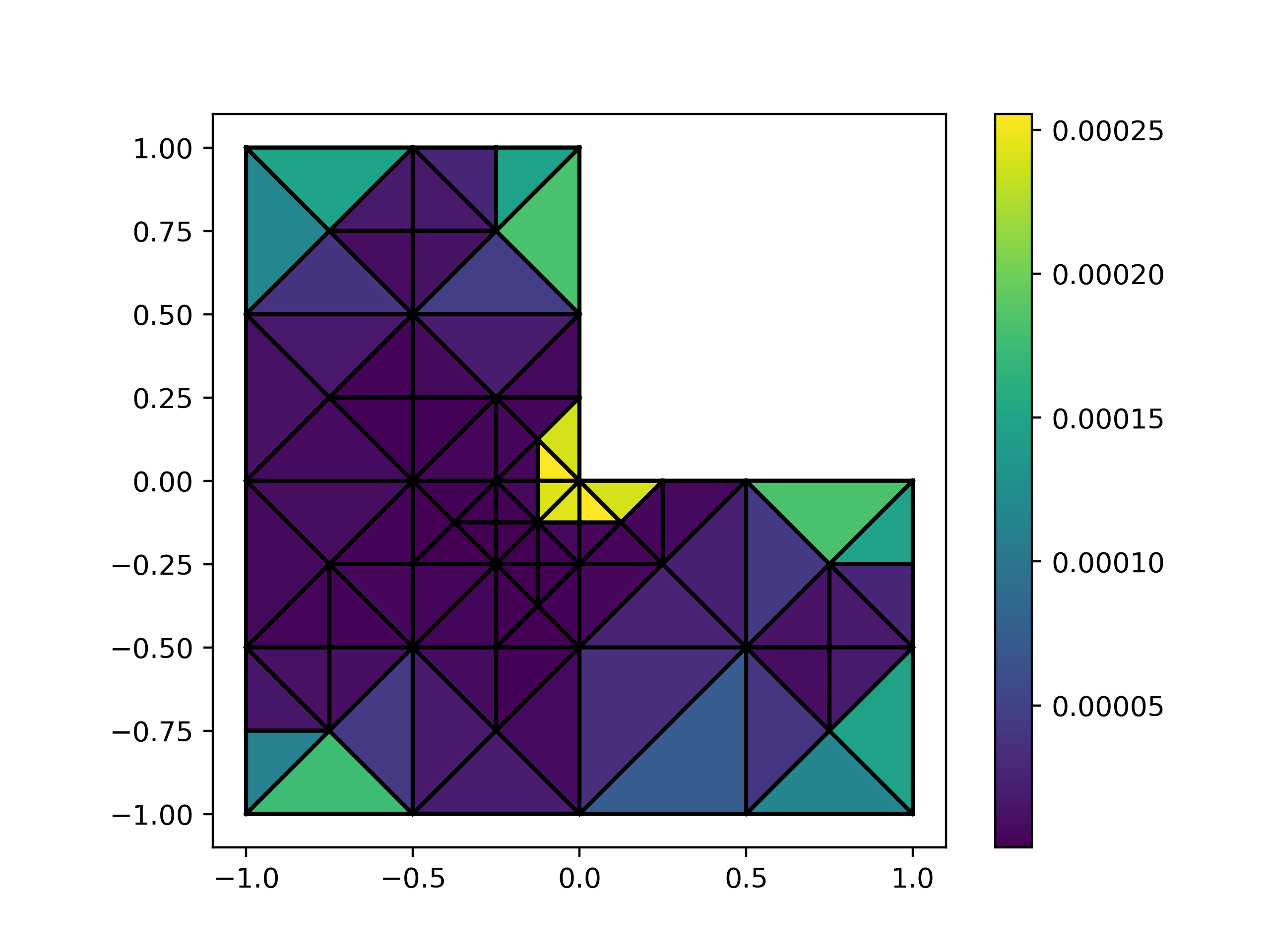}
    \caption{Repartition of the error on the initial and the fifth mesh for $\lambda=\tau=\mu=1$}
    \label{fig:initialmesh}
\end{figure}
We used the Dörfler marking strategy with several Dörfler parameters and an overkill solution computed on a finer mesh with cubic polynomials. As expected,
we observe that the adaptive algorithm is able to recover the optimal
convergence rate in figure \ref{fig:adaptiv-doerfler}, while the uniform refinement leads to a slower convergence rate due to the reentrant corner.
Although the optimal convergence rate is usually achieved independently of the D\"orfler parameter, the computational results on a finite
number of refinement levels may actually be sensitive to it, depending on the kind of singularities involved in the solution. We let
$\lambda$ vary from $1$ to $10^8$ and $\tau$ from $1$ to $10^{-8}$ and observed the robustness of the a posteriori estimate.
\begin{figure}[h]
    \centering
    \includegraphics[width=\textwidth]{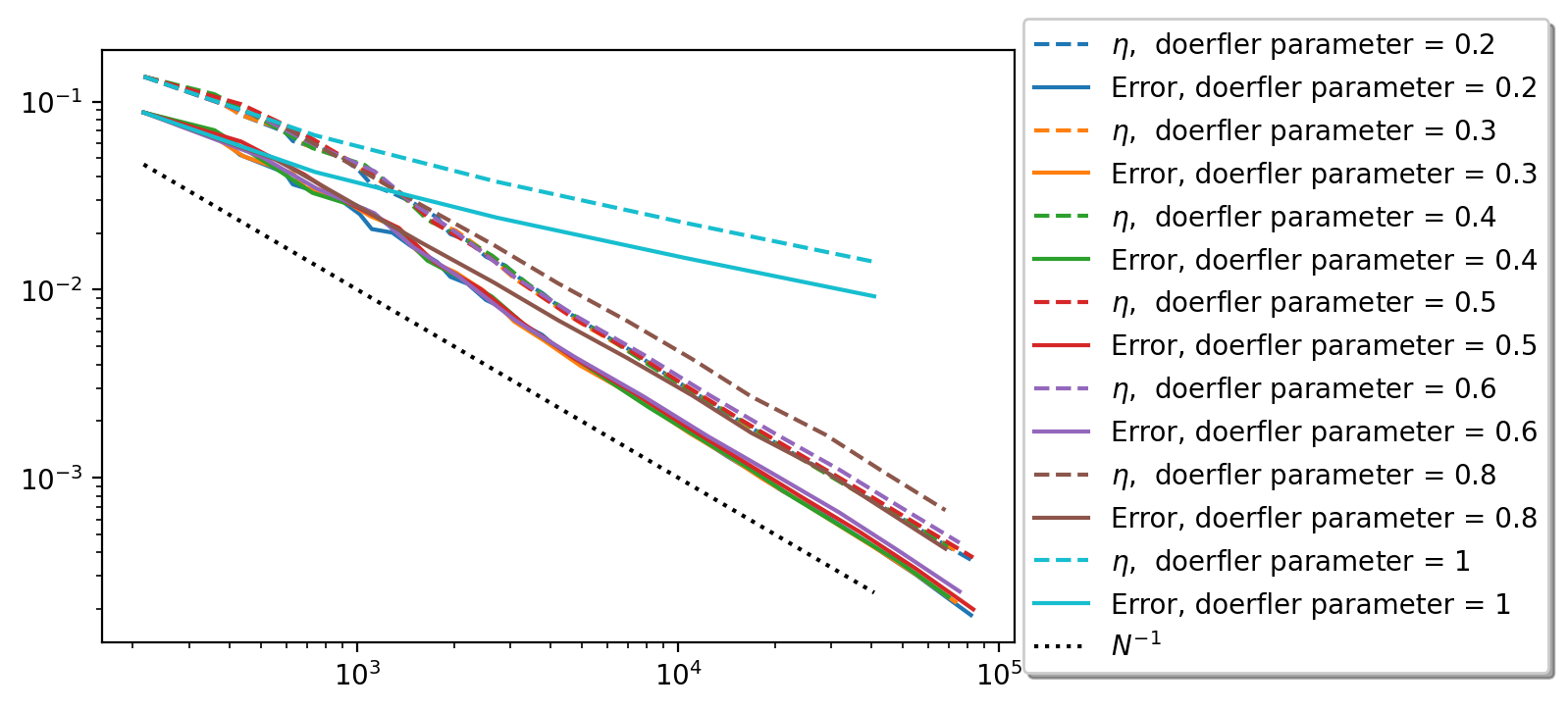}
    \caption{Comparison of error estimator and error for uniform and adaptive refinement}
    \label{fig:adaptiv-doerfler}
\end{figure}


{\em Acknowledgement.} We thank the two anonymous referees for the very careful reading of our manuscript and valuable suggestions that led to a considerable
improvement of the presentation.

\bibliography{articles,books}

\begin{thebibliography}{10}
\expandafter\ifx\csname url\endcsname\relax
  \def\url#1{\texttt{#1}}\fi
\expandafter\ifx\csname urlprefix\endcsname\relax\def\urlprefix{URL }\fi
\expandafter\ifx\csname href\endcsname\relax
  \def\href#1#2{#2} \def\path#1{#1}\fi

\bibitem{Bio:41}
M.~A. Biot, General theory of three-dimensional consolidation, J. Appl. Phys.
  12 (1941) 155--164.

\bibitem{MurThoLou:96}
M.~Murad, V.~Thom{\'e}e, A.~Loula, Asymptotic behavior of semidiscrete
  finite-element approximations of {B}iot's consolidation problem, SIAM J.
  Numer. Anal. 33 (1996) 1065--1083.

\bibitem{PhiWhe:08}
P.~J. Phillips, M.~F. Wheeler, A coupling of mixed and discontinuous {G}alerkin
  finite element methods for poroelasticity, Comput. Geosci. 12 (2008)
  417--435.

\bibitem{OyaRui:16}
R.~Oyarz{\'u}a, R.~Ruiz-Baier, Locking-free finite element methods for
  poroelasticity, SIAM J. Numer. Anal. 54 (2016) 2951--2973.

\bibitem{LeeMarWin:17}
J.~J. Lee, K.-A. Mardal, R.~Winther, Parameter-robust discretization and
  preconditioning of {B}iot's consolidation model, SIAM J. Sci. Comput. 39
  (2017) A1--A24.

\bibitem{ErnMeu:09}
A.~Ern, S.~Meunier, A posteriori error analysis of {E}uler-{G}alerkin
  approximations to coupled elliptic-parabolic problems, ESAIM: M2AN 43 (2009)
  353--375.

\bibitem{KorSta:05}
J.~Korsawe, G.~Starke, A least squares mixed finite element method for {B}iot's
  consolidation problem in porous media, SIAM J. Numer. Anal. 43 (2005)
  318--339.

\bibitem{Lee:16}
J.~J. Lee, Robust error analysis of coupled mixed methods for {B}iot's
  consolidation model, J. Sci. Comput. 69 (2016) 610--632.

\bibitem{RieDipErn:17}
R.~Riedlbeck, D.~A. {DiPietro}, A.~Ern, Equilibrated stress reconstructions for
  linear elasticity problems with application to a posteriori error analysis,
  in: C.~Canc{\`e}s, P.~Omnes (Eds.), Finite Volumes for Complex Applications
  VIII -- Methods and Theoretical Aspects, Springer, 2017, pp. 293--301.

\bibitem{PraSyn:47}
W.~Prager, J.~L. Synge, Approximations in elasticity based on the concept of
  function space, Quart. Appl. Math. 5 (1947) 241--269.

\bibitem{LadLeg:83}
P.~Ladev\`{e}ze, D.~Leguillon, Error estimate procedure in the finite element
  method and applications, SIAM J. Numer. Anal. 20 (1983) 485--509.

\bibitem{BraSch:08}
D.~Braess, J.~Sch{\"o}berl, Equilibrated residual error estimator for edge
  elements, Math. Comp. 77 (2008) 651--672.

\bibitem{BraPilSch:09}
D.~Braess, V.~Pillwein, J.~Sch{\"o}berl, Equilibrated residual error estimates
  are $p$-robust, Comput. Methods Appl. Mech. Engrg. 198 (2009) 1189--1197.

\bibitem{ErnVoh:15}
A.~Ern, M.~Vohral{\'{\i}}k, Polynomial-degree-robust a posteriori error
  estimates in a unified setting for conforming, nonconforming, discontinuous
  {G}alerkin, and mixed discretizations, SIAM J. Numer. Anal. 53 (2015)
  1058--1081.

\bibitem{ArnWin:02}
D.~N. Arnold, R.~Winther, Mixed finite elements for elasticity, Numer. Math. 92
  (2002) 401--419.

\bibitem{BerKobMolSta:19}
F.~Bertrand, B.~Kober, M.~Moldenhauer, G.~Starke, Weakly symmetric stress
  equilibration and a posteriori error estimation for linear elasticity,
  Submitted for PublicationArXiv: 1808.02655.

\bibitem{BofBreFor:13}
D.~Boffi, F.~Brezzi, M.~Fortin, Mixed Finite Element Methods and Applications,
  Springer, Heidelberg, 2013.

\bibitem{AinVej:19}
M.~Ainsworth, T.~Vejchodsky, A simple approach to reliable and robust a
  posteriori error estimation for singularly perturbed problems, Comput.
  Methods Appl. Mech. Engrg. 353 (2019) 373--390.

\bibitem{Hor:95}
C.~O. Horgan, Korn's inequalities and their applications in continuum
  mechanics, SIAM Rev. 37 (1995) 491--511.

\end{thebibliography}
 
\begin{figure}[h]
    \centering
    \includegraphics[width=\textwidth]{./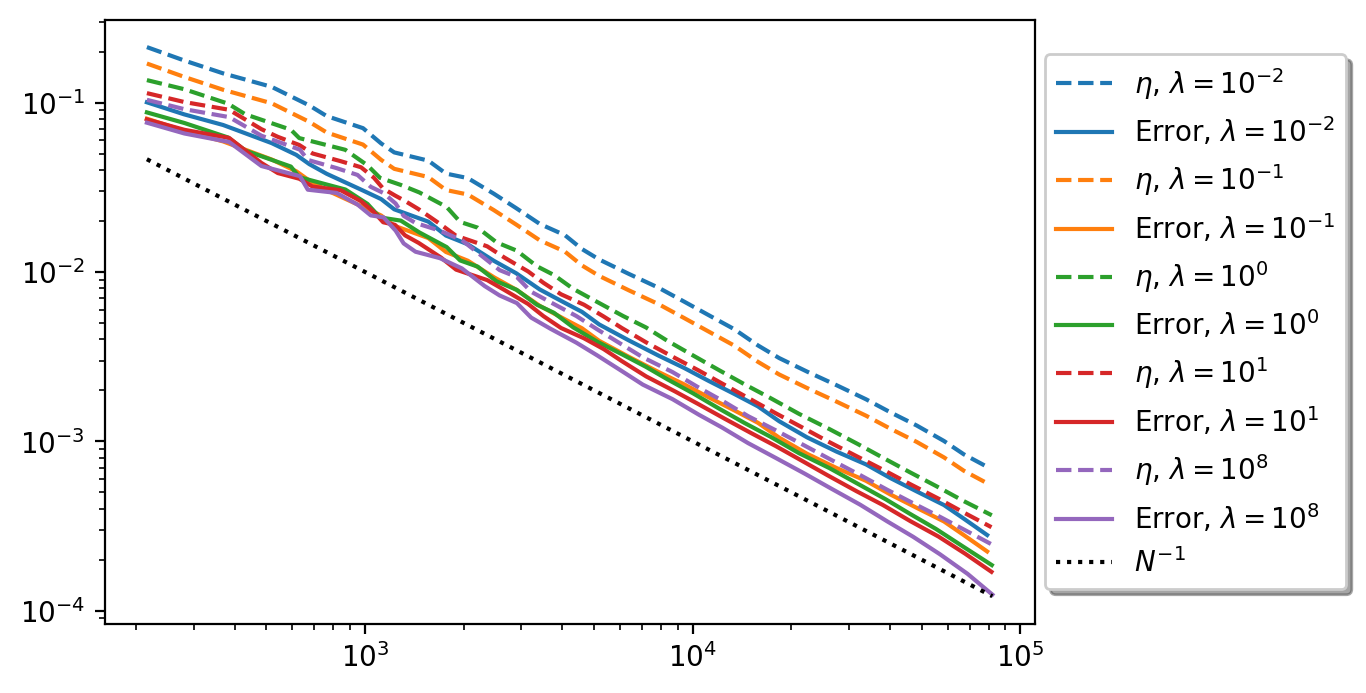}
    \caption{Error estimator and error for adaptive refinement, dörfler parameter 0.2}
    \label{fig:adaptiv-doerfler20}
\end{figure}
\begin{figure}[h]
    \centering
    \includegraphics[width=\textwidth]{./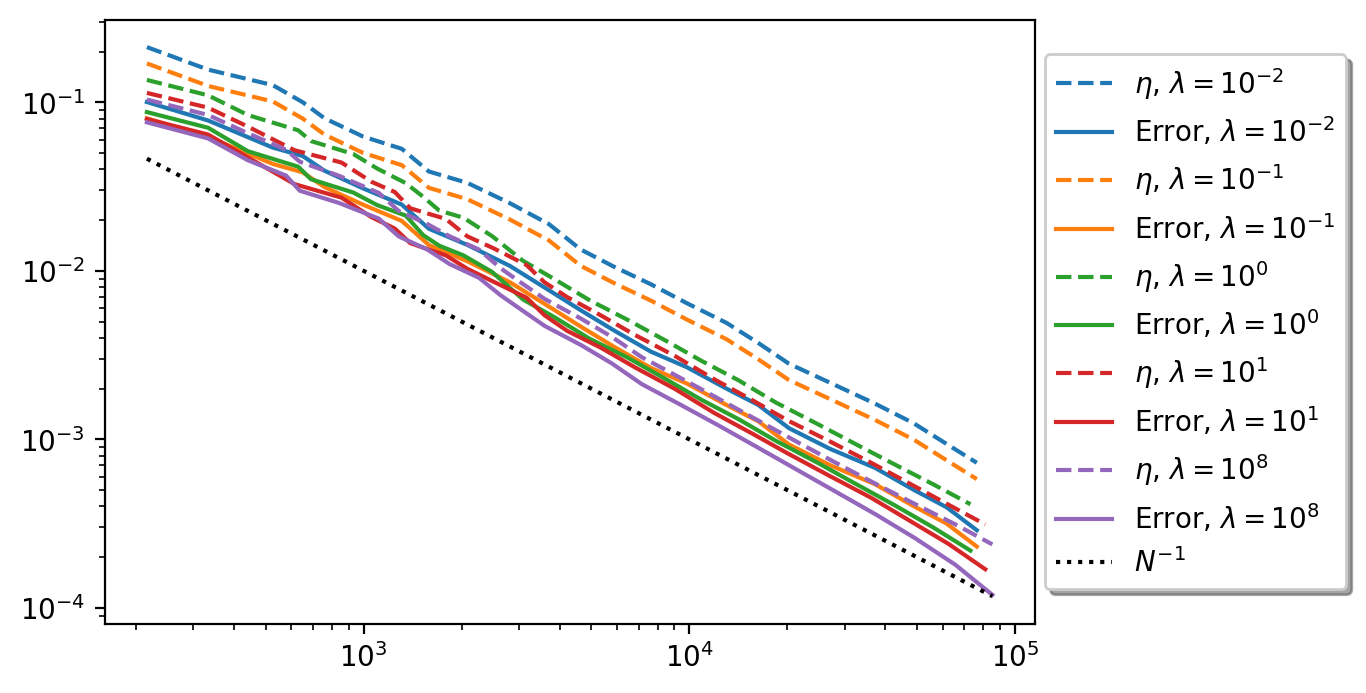}
    \caption{Error estimator and error for adaptive refinement, dörfler parameter 0.3}
    \label{fig:adaptiv-doerfler30}
\end{figure}
\begin{figure}[h]
    \centering
    \includegraphics[width=\textwidth]{./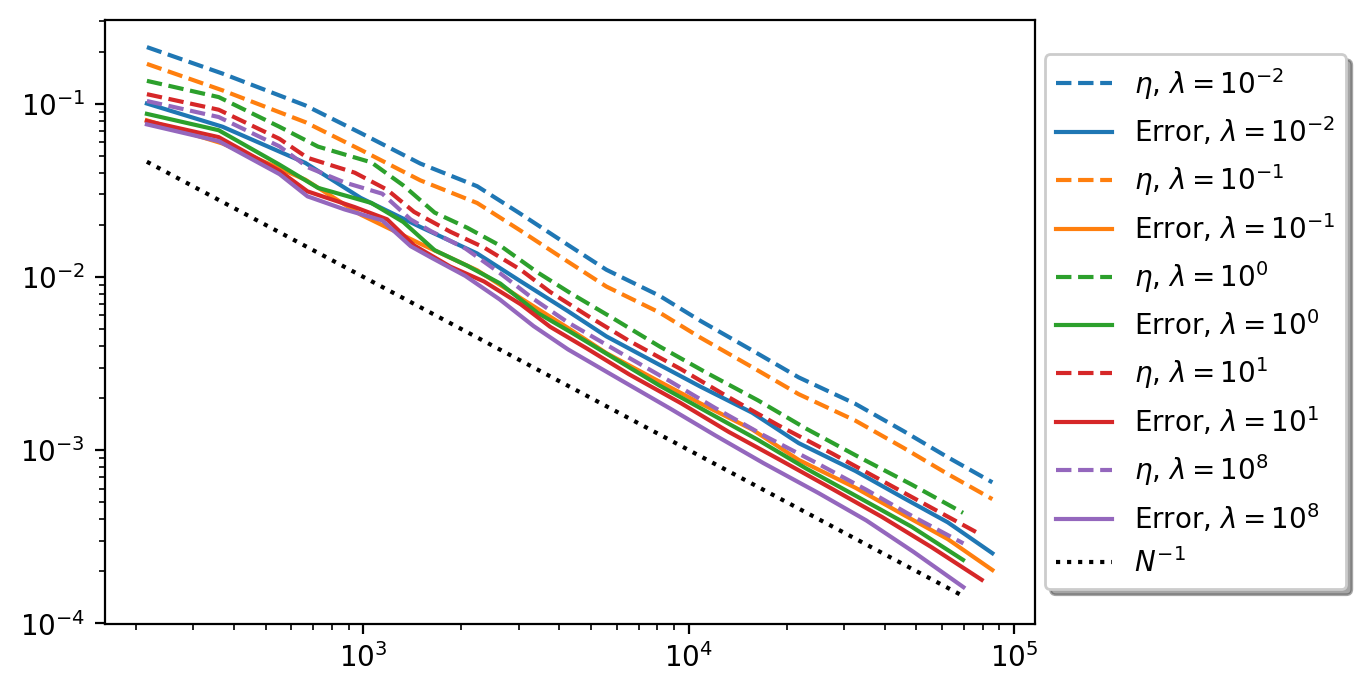}
    \caption{Error estimator and error for adaptive refinement, dörfler parameter 0.4}
    \label{fig:adaptiv-doerfler40}
\end{figure}
\begin{figure}[h]
    \centering
    \includegraphics[width=\textwidth]{./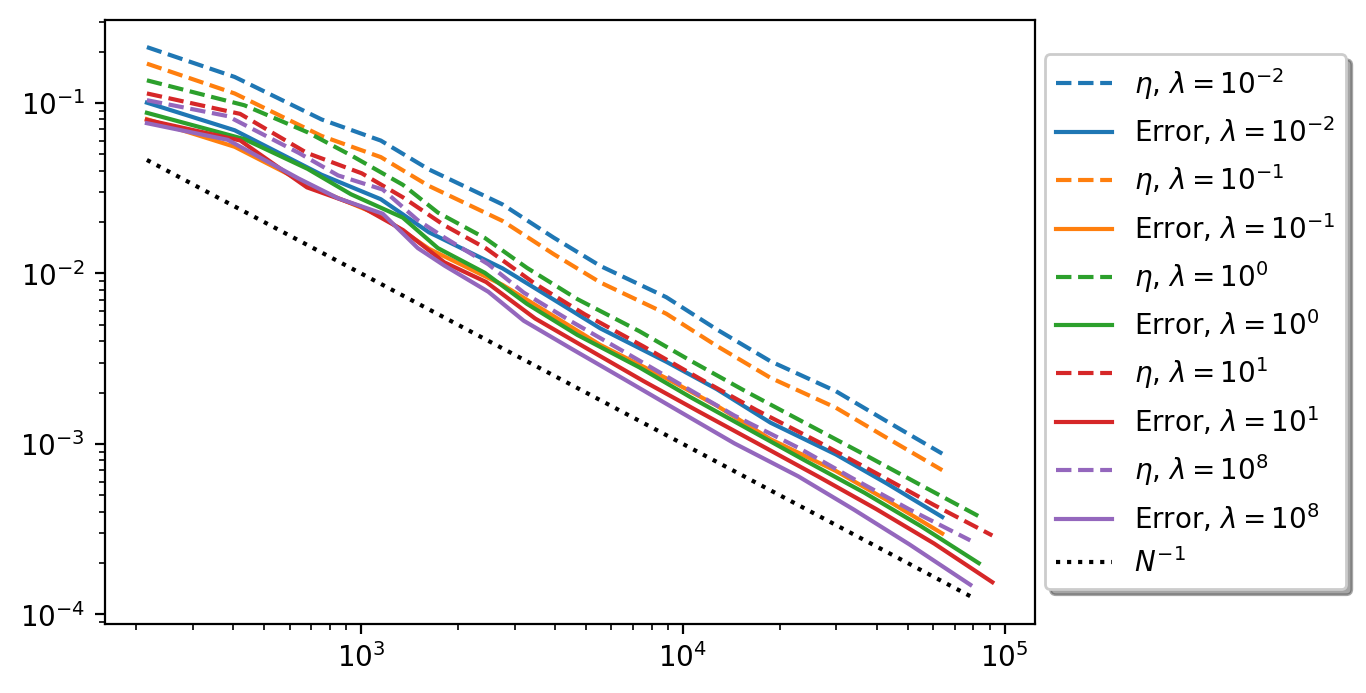}
    \caption{Error estimator and error for adaptive refinement, dörfler parameter 0.5}
    \label{fig:adaptiv-doerfler50}
\end{figure}
\begin{figure}[h]
    \centering
    \includegraphics[width=\textwidth]{./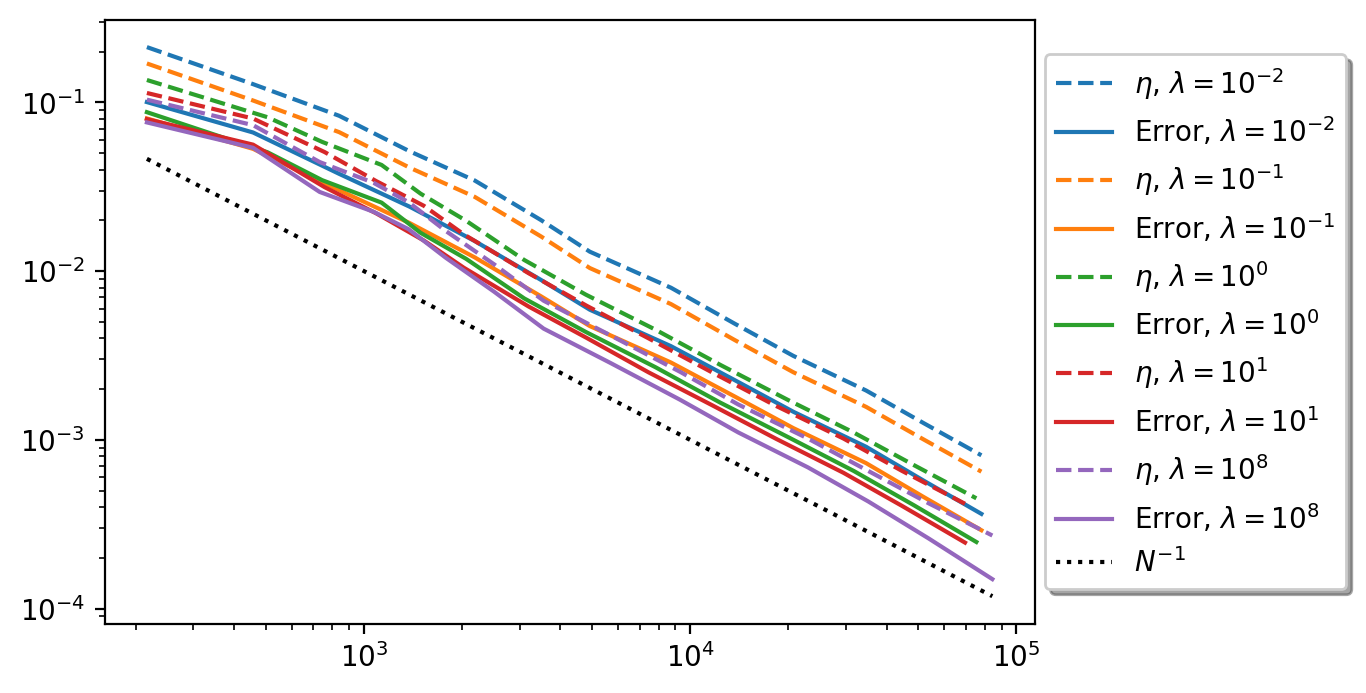}
    \caption{Error estimator and error for adaptive refinement, dörfler parameter 0.6}
    \label{fig:adaptiv-doerfler60}
\end{figure}
\begin{figure}[h]
    \centering
    \includegraphics[width=\textwidth]{./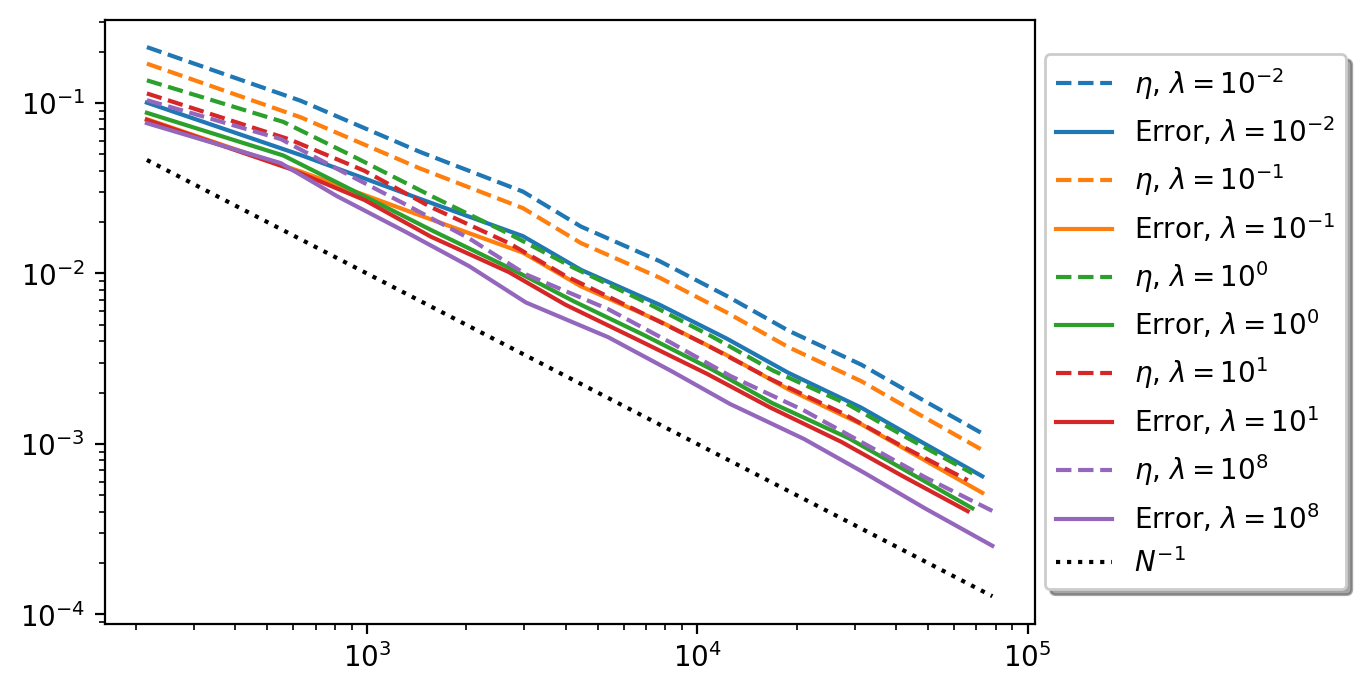}
    \caption{Error estimator and error for adaptive refinement, dörfler parameter 0.8}
    \label{fig:adaptiv-doerfler80}
\end{figure}
\begin{figure}[h]
    \centering
    \includegraphics[width=\textwidth]{./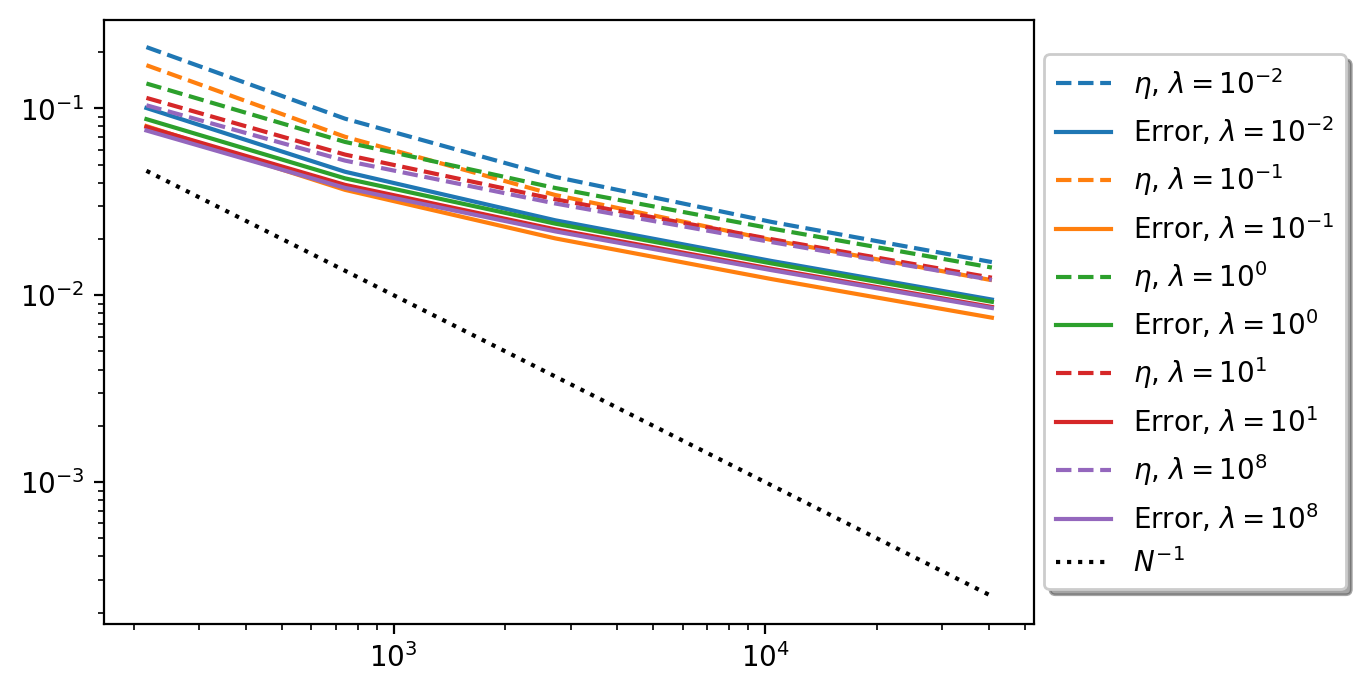}
    \caption{Error estimator and error for adaptive refinement, dörfler parameter 1}
    \label{fig:adaptiv-doerfler100}
\end{figure}

\end{document}